\numberwithin{equation}{section}
\numberwithin{table}{section} 
\newcommand{\hsp}[1]{{\hbox{\hspace{#1}}}}
\newcounter{letcnt1} 
\newcounter{letcnt2} 
\newcounter{talkcnt} 
\def\a{\alpha}  
\def\b{\beta}  
\def\d{\delta}  
\def\e{\varepsilon}  
\def\z{\zeta}
\def\m{\mu}
\def\n{\nu}
\def\w{\omega} 
\def\x{\xi}
\def\GsD{\Gamma\backslash D}
\def\bC{\mathbb C} \def\cC{\mathcal C}
\def\td{\mathrm{d}} \def\sfd{\mathsf{d}}
 \def\tdet{\mathrm{det}}
 \def\tdim{\mathrm{dim}}
\def\tEnd{\mathrm{End}}
\def\cF{\mathcal F}
\def\tGr{\mathrm{Gr}}
\def\fg{{\mathfrak{g}}}
\def\bh{\mathbf{h}}
\def\bi{\mathbf{i}}
\def\sfm{\mathsf{m}}
 \def\sfn{\mathsf{n}}
 \def\cO{\mathcal O}
 \def\olP{{\overline P}}
\def\bQ{\mathbb Q}
\def\bR{\mathbb R}
\def\tRe{\mathrm{Re}}
\def\fs{\mathfrak{s}}
\def\bs{\mathbf{s}}
 \def\tspan{\mathrm{span}}
\def\sfw{\mathsf{w}}
\def\sX{\mathscr{X}}
\def\fz{\mathfrak{z}} 
\def\half{\tfrac{1}{2}}
\def\tand{\quad\hbox{and}\quad}
\def\bs{\backslash}
\def\smallb{{\hbox{\small{$\bullet$}}}}
\def\op{\oplus}
\def\ot{\otimes}
\def\tw{\hbox{\small $\bigwedge$}}
\newenvironment{a.list}
  {\begin{enumerate}[label=\alph*.,itemsep=3pt,leftmargin=25pt,listparindent=20pt]}
  {\end{enumerate}}
\newenvironment{num.list}
  {
  \begin{enumerate}[itemsep=3pt,leftmargin=25pt,listparindent=20pt,label={\arabic*.}]
  }
  {\end{enumerate}}
\newenvironment{i_list}
  {\begin{enumerate}[label=(\roman*),itemsep=3pt,leftmargin=25pt,listparindent=20pt]}
  {\end{enumerate}}
\newenvironment{i_list_emph}
  {\begin{enumerate}[label=\emph{(\roman*)},itemsep=3pt,leftmargin=25pt,listparindent=20pt]}
  {\end{enumerate}}
\newenvironment{I_list}
  {\begin{enumerate}[label=(\Roman*),itemsep=3pt,leftmargin=25pt,listparindent=20pt]}
  {\end{enumerate}}
\newtheorem{corollary}[equation]{Corollary}
\newtheorem{lemma}[equation]{Lemma}
\newtheorem{theorem}[equation]{Theorem}
\newtheorem{conjecture}[equation]{Conjecture}
\newtheorem*{theorem*}{Theorem}
\theoremstyle{definition}
\newtheorem*{boldQ*}{Question}
\newtheorem*{boldP*}{Problem}
\theoremstyle{definition}
\theoremstyle{remark}
\newtheorem*{assume*}{Assume}
\newtheorem*{answer*}{Answer}
\newtheorem*{claim*}{Claim}
\newtheorem*{definition*}{Definition}
\newtheorem*{example*}{Example}
\newtheorem*{hint*}{Hint}
\newtheorem*{notation*}{Notation}
\newtheorem{remark}[equation]{Remark}
\newtheorem*{remark*}{Remark}
\newtheorem*{remarks*}{Remarks}
\newtheorem*{fact*}{Fact}
\newtheorem*{emphQ*}{Question}
\newtheorem*{emphA*}{Answer}
\def\bs{\backslash}
\def\ddb{\partial\overline{\partial}}
\def\olt{\overline{t}{}}
\def\dbar{\overline{\partial}}
\def\tsum{\textstyle{\sum}}
\def\bfh{\mathbf{h}}
\def\sfh{\mathsf{h}}
\def\ola{\overline{\a}{}}
\def\olb{\overline{\b}{}}
\def\olv{\overline{v}}
\def\olx{\overline{\xi}{}}
\def\oleta{\overline\eta{}}
\def\dv{\partial_v}
\def\dbv{\dbar_{\olv}}
\def\ddbv{\dv \dbv}
\def\tdv{\td_v}
\def\tdbv{\td_{\olv}}
\def\olB{\overline B}
\def\GsD{\Gamma\bs D}
\def\PhiS{\Phi{}^\mathsf{S}}
\def\hPhiS{\hat\Phi{}^\mathsf{S}}
\def\hatP{\hat\wp}
\def\olP{\overline{\wp}}
\begin{document}
\title[Pseudoconvexity at infinity: a codimension one example]{Pseudoconvexity at infinity in Hodge theory: a codimension one example}



\author[Robles]{Colleen Robles}
\email{robles@math.duke.edu}
\address{Mathematics Department, Duke University, Box 90320, Durham, NC  27708-0320} 
\thanks{Robles is partially supported by NSF DMS 1611939, 1906352.}

\date{\today}

\begin{abstract}
The generalization of the Satake--Baily--Borel compactification to arbitrary period maps has been reduced to a certain extension problem on certain ``neighborhoods at infinity''.  Extension problems of this type require that the neighborhood be pseudoconvex.  The purpose of this note is to establish the desired pseudoconvexity in one relatively simple, but non-trivial, example: codimension one degenerations of a period map of weight two Hodge structures with $p_g=2$. 
\end{abstract}
\keywords{period map, variation of (mixed) Hodge structure}
\subjclass[2010]
{
 14D07, 32G20, 
 58A14. 
}
\maketitle

\section{Introduction}

This paper is a companion note to \cite{Robles-extnHnorm}.  The work is motivated by the problem to generalize the Satake--Baily--Borel compactification to arbitrary period maps.  

\subsection{Motivation}

Briefly, the compactification problem is as follows.  Suppose that $D$ is a Mumford--Tate domain parameterizing pure, effective, weight $\sfw$, $Q$--polarized Hodge structures on a finite dimensional rational vector space $V$.  Fix a period map $\Phi : B \to \GsD$  defined on a smooth quasi-projective $B$ with smooth projective completion $\olB \supset B$ and simple normal crossing divisor $Z = \olB \bs B$ at infinity.  Let $\wp = \Phi(B)$ denote the image.  A proper \emph{topological} Satake--Baily--Borel (SBB) type completion $\PhiS : \olB \to \olP$ of the period map is constructed in \cite{GGRinfty}.  (See \cite{Robles-extnHnorm} for a more detailed review than we give here.)  Without loss of generality the period map is proper.  Then $\wp$ is quasi-projective \cite{BBT18}.  One would like to assert that $\olP$ is projective algebraic.  This is known to be the case when $D$ is hermitian and $\Gamma$ is arithmetic: $\olP$ is the closure of $\wp$ in the Satake--Baily--Borel compactification of $\Gamma\bs D$.  In general it is an open problem to show that $\olP$ is a complex analytic space.  

The completion $\PhiS$ is continuous, proper and admits a ``Stein factorization''
\[
\begin{tikzcd}
  \olB \arrow[r,"\hPhiS"]
  & \hatP \arrow[r]
  & \olP \,.
\end{tikzcd}
\]
The fibres of $\hatP \to \olP$ are finite, and the fibres of $\hPhiS$ are connected, compact algebraic subvarieties of $\olB$.  It is conjectured that the topological space $\hatP$ is Moishezon, and the map $\hPhiS : \olB \to \hatP$ is a morphism.  The conjecture holds in the case that $D$ is hermitian symmetric, and in the case that $\tdim\,\wp \le 2$, \cite{GGLR}.  Let $Z_1,\ldots,Z_\nu$ denote the smooth irreducible components of $Z$, and $Z_I = \cap_{i\in I}\, Z_i$ the closed strata.  The conjecture has been reduced to showing that every fibre $A$ of $\hPhiS$ admits a neighborhood $X \subset \olB$ with the following properties \cite[Theorem 3.20]{GGRinfty}:
\begin{I_list}
\item \label{i:proper}
The restriction of $\hPhiS$ to $X$ is proper.
\item \label{i:extn}
Holomorphic functions on $Z_I \cap X$ extend to $X$.
\end{I_list}

\noindent About property \ref{i:proper} we can say the following.  Let $\cF^{\sfw} \subset \cF^{\sfw-1} \subset \cdots \subset \cF^0$ denote the Hodge vector bundles over $B$.  Assume that the local monodromy at infinity is unipotent, so that the $\cF^p$ extend to $\olB$.  

\begin{theorem}[{\cite{GGRinfty}}] \label{T:ggr}
Every fibre $A$ of $\hPhiS$ admits a neighborhood $X \subset \olB$ with the following properties: 
\begin{i_list_emph}
\item 
The restriction of $\hPhiS$ to $X$ is proper.
\item \label{i:triv}
For every $p$, there exists a positive integer $1 \le m_p$ so that the line bundle $\tdet(\cF^p)^{\ot m_p}$ is trivial over $X$.
\end{i_list_emph}
\end{theorem}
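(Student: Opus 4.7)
The plan is to treat the two conclusions separately, with properness being essentially formal and the triviality of the determinant line bundles carrying the substantive content. For (i), I would take $X := (\hPhiS)^{-1}(U)$ where $U$ is any open neighborhood of the point $a := \hPhiS(A) \in \hatP$; since $\hPhiS$ is proper on the compact space $\olB$, the restriction $\hPhiS|_X : X \to U$ is automatically proper.

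For (ii), the central observation is that $A$ is contracted: it is connected, maps to a single point of $\hatP$, and by finiteness of the map $\hatP \to \olP$ it goes to a single point of $\olP$. On the open part $A^\circ := A \cap B$, this forces $\Phi$ to be constant as a map into $\GsD$, so each $\cF^p|_{A^\circ}$ is holomorphically trivial, being the pullback of a fixed Hodge flag. The local monodromy $T_i$ of the underlying local system $V$ around any component $Z_i$ of $Z$ meeting $A$ must therefore preserve this fixed flag; combined with the unipotent hypothesis, the induced action of $T_i$ on the line $\tdet(\cF^p)$ is trivial. I would then invoke the Deligne canonical extension---an honest locally free sheaf on $\olB$ under unipotent monodromy---together with its compatibility with restriction to compact analytic subvarieties, to identify $\tdet(\cF^p)|_A$ with the Deligne extension of a holomorphically trivial bundle carrying trivial monodromy; this extension is itself the trivial line bundle on $A$. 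The exponent $m_p$ in the statement provides flexibility to absorb any residual torsion ambiguity arising from the way $A$ meets the boundary strata $Z_I$.

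The main obstacle is the passage from trivialization on $A$ to trivialization on a neighborhood $X$. Given a nowhere-vanishing section $\sigma_0 \in H^0(A, \tdet(\cF^p)^{\ot m_p}|_A)$, my plan is to apply Grauert's direct image theorem to the proper morphism $\hPhiS|_X$, shrinking $U$ and $X$ so that $(\hPhiS|_X)_*\tdet(\cF^p)^{\ot m_p}$ is coherent, then use cohomology and base change at the isolated point $a$ to identify its fibre there with $\bC\cdot\sigma_0$. A local generator pulls back to a section on $X$ extending $\sigma_0$, and compactness of $A$ together with upper semicontinuity of the vanishing locus yields, after a final shrinking of $X$, a nowhere-vanishing section on $X$. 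A technical subtlety is that $\hatP$ is not a priori a complex analytic space, so Grauert's theorem (which requires such a base) must be applied with care---either by reducing to a Stein cover of a suitable complex-analytic model of the image, or by working directly in $\olB$ using the analytic structure there and taking $X$ to be a tubular neighborhood of $A$ on which the coherent direct image can be constructed locally. Since the statement ranges over finitely many $p$, the final $X$ is obtained as the intersection of the neighborhoods produced for each individual $p$.
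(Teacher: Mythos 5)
Two preliminary remarks before the substance. First, the paper does not prove Theorem \ref{T:ggr}: it is quoted from \cite{GGRinfty} and used as a black box, so there is no internal argument to measure your proposal against, and your attempt has to be judged on its own terms. Second, part (i) is indeed formal as you say, but note that your last step --- intersecting the neighborhoods produced for the various $p$ --- destroys saturation, and the restriction of $\hPhiS$ to a non-saturated open set need not be proper; you must re-saturate at the end (using that $\hPhiS$ is closed to find a small $U$ with $(\hPhiS)^{-1}(U)$ contained in the intersection). Also, constancy of $\Phi$ on $A\cap B$ makes $\cF^p|_{A\cap B}$ a \emph{flat} bundle whose monodromy is the restriction of the monodromy representation to the fixed flag, not a trivial bundle; its determinant is a unitary character which is torsion only after invoking integrality of the monodromy and Kronecker's theorem. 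More seriously, in the situation this paper actually treats the fibre satisfies $A\subset Z_1^*$, so $A\cap B=\emptyset$ and your analysis of $L_p|_A$ is vacuous; the argument has to be run on the boundary strata using the limiting mixed Hodge structures, which you only gesture at.

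The genuine gap is the passage from triviality on $A$ to triviality on a neighborhood $X$. Grauert's direct image theorem and cohomology-and-base-change require the base to be a complex analytic space, and $\hatP$ is not known to be one --- that is precisely the open conjecture motivating this paper, so this cannot be deferred as a ``technical subtlety.'' Your fallback of working in a tubular neighborhood of $A$ inside $\olB$ does not repair the argument, because the implication ``$L|_A$ trivial $\Rightarrow L$ trivial near $A$'' is false for compact subvarieties of complex manifolds. For example, on $C\times\Delta$ with $C$ a curve of positive genus and $A=C\times\{0\}$, take $L=\cO(D_1-D_2)$ for two sections $D_i=\{(\phi_i(t),t)\}$ with $\phi_1(0)=\phi_2(0)$ but $\phi_1\neq\phi_2$: then $L|_A\cong\cO_A$, yet $L|_{C\times\{t\}}$ is a nontrivial degree-zero bundle for $t\neq 0$, so $H^0$ of every neighborhood of $A$ vanishes and the restriction map to $H^0(A,L|_A)$ is not surjective. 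The obstructions to propagating a trivialization off a compact subvariety are the subject of Ueda theory and do not vanish for formal reasons. What rules out this behavior for $\tdet(\cF^p)$ is Hodge-theoretic positivity: the argument in \cite{GGRinfty} relies on the curvature properties of the (extended) Hodge metrics and on the quasi-projectivity of the image with $\Lambda$ descending to an ample bundle there \cite{BBT18}, which together force a bundle that is numerically trivial on a fibre of $\hPhiS$ to be torsion on a whole saturated neighborhood. Without some such positivity input your argument cannot close.
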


The second property \ref{i:extn} is an Ohsawa--Takegoshi type extension problem (although without the need for bounds on the $L^2$ norms) \cite{MR1782659, MR3525916}.  Such theorems usually impose the hypothesis that $X$ is pseudoconvex.

\subsection{Pseudoconvexity}

Recall that the neighborhood $X$ is \emph{pseudoconvex} if it admits a plurisubharmonic exhaustion function $\rho : X \to \bR$.  A continuous function $\rho: X \to \bR$ is an \emph{exhaustion} if $\rho^{-1}[-\infty,r)$ is relatively compact for all $r \in \bR$.  The function is \emph{plurisubharmonic} (psh) if for every holomorphic map $\psi : \Delta \to X$, the composition $\rho \circ \psi$ is subharmonic.  If $\rho$ is $\cC^2$, then it is psh if and only if $\bi\ddb \rho \ge 0$.  For example, if $f \in \cO(X)$, then $\rho = |f|^2$ is psh.  Likewise, a line bundle with metric $h$ is positive if $-\log  h$ is psh.  Oka's Theorem asserts that a complex manifold is Stein if and only if it admits a smooth strictly psh exhaustion function. 

\begin{conjecture}[{\cite{Robles-extnHnorm}}] \label{conj:pseudocnvx}
The neighborhood $X$ in Theorem \ref{T:ggr} may be chosen to be pseudoconvex.  There is a continuous exhaustion function $\rho : X \to [0,\infty)$ with the property that $\ddb \rho( v , \overline v) \ge 0$, and equality holds if and only if $v$ is tangent to a fibre of $\PhiS$.
\end{conjecture}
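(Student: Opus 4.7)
The plan is to build $\rho$ from the Hodge norm of the trivializing sections of the extended determinant Hodge bundles provided by Theorem \ref{T:ggr}\emph{(ii)}. Choose nowhere-vanishing sections $s_p \in H^0(X, \tdet(\cF^p)^{\ot m_p})$, equip $\tdet(\cF^p)$ with the Hodge metric $h_p$ induced by the polarization $Q$, and set
\begin{equation*}
  \rho \;=\; -\sum_{p} \tfrac{a_p}{m_p}\,\log h_p(s_p, s_p) \;+\; C,
\end{equation*}
for positive constants $a_p, C$ to be chosen. By Griffiths' curvature formula each $\tdet(\cF^p)$ carries semi-positive Hodge curvature, so $-\log h_p(s_p, s_p)$ is psh on $X \cap B$ and hence so is $\rho$. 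Moreover, since $\Phi$, and a fortiori the Hodge filtration, is constant on any fibre $A$ of $\hPhiS$, each $h_p(s_p, s_p)$ is constant on $A$, and therefore so is $\rho|_A$.

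For the exhaustion property one exploits that the image $\wp = \Phi(B)$ is quasi-projective and that the pullback of the determinant Hodge line bundle to $\wp$ is a positive multiple of an ample line bundle. Via the proper restriction $\hPhiS|_X$ from Theorem \ref{T:ggr}\emph{(i)}, this forces $h_p(s_p, s_p) \to 0$ as one approaches $\partial X$, so $\rho \to \infty$ there, and choosing $C$ large enough makes $\rho$ take values in $[0, \infty)$. Across the boundary divisor $Z \cap X$, Schmid's nilpotent orbit theorem controls the asymptotics of $h_p$: under the unipotent monodromy hypothesis the Chern form of the extended Hodge metric is a positive $(1,1)$-current with at most logarithmic singularities along $Z$, which suffices to propagate $\ddb \rho \ge 0$ to all of $X$.

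The hard part will be characterizing the null directions of $\ddb \rho$. Interior to $B$, Griffiths' formula writes $\ddb \log h_p(s_p, s_p)$ as (a sign times) the pullback under $\Phi$ of the Chern form of the Hodge line bundle on $D$, whose null cone in $TD$ is exactly the image of the Griffiths-horizontal direction annihilated by the second fundamental form. With the weights $a_p$ chosen to activate every Hodge level, the joint null locus of $\sum a_p\,\ddb \log h_p$ at a point of $B \cap X$ reduces to the kernel of $d\Phi$, and by construction of $\hPhiS$ this kernel is the tangent space to the fibre of $\PhiS$. On the boundary strata $Z_I \cap X$ the analogous analysis must be carried out via the relative weight filtration of the limit mixed Hodge structure, where the role of $d\Phi$ is played by the associated nilpotent operators $N_i$.

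In the codimension one, weight two, $p_g = 2$ setting of this note the limit mixed Hodge structures are classified by a single nilpotent operator $N$ on a four-dimensional Hodge structure, leaving only a handful of cases. For each, I would compute the $\tSL_2$-orbit approximation to $h_p(s_p, s_p)$ and its complex Hessian in explicit coordinates, verify semi-positivity by inspection, and match the null directions to the tangent spaces of the fibres of $\PhiS$ on $Z \cap X$ by a direct case check. The compatibility of $\rho$ with Theorem \ref{T:ggr}\emph{(ii)}, which makes $\tdet(\cF^p)^{\ot m_p}$ trivial on each fibre of $\hPhiS$, ensures that the Hodge norm is constant on fibres so that, after the additive normalization, $\rho$ has the required range and nullity behaviour.
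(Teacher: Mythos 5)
Your proposal does not close the two places where the real work lies, and its basic formula has the wrong asymptotics. First, by the Schmid--Cattani--Kaplan--Schmid norm estimates the Hodge norm of a trivializing section of $\tdet(\cF^p)^{\ot m_p}$ \emph{grows} (like a power of $\log|t_1|^2$) as one approaches the divisor $Z$ --- in the notation of \S\ref{S:h1}, $h_0 = -\tfrac{1}{2\pi}\log|t_1|^2\,q_1(\a)+\cdots \to +\infty$ --- so your $\rho = -\sum_p \tfrac{a_p}{m_p}\log h_p(s_p,s_p)+C$ tends to $-\infty$ along $Z\cap X$; it is neither non-negative nor an exhaustion, and no additive constant repairs this. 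The claim that $h_p(s_p,s_p)\to 0$ at $\partial X$ has no basis: ampleness of the Hodge bundle on $\wp$ controls nothing at the topological boundary of a small neighborhood of a fibre. The paper's construction is structurally different: it builds a non-negative psh function $\rho_0+\rho_1$ whose zero locus is exactly the fibre $A$, then takes $X=\{\rho_0+\rho_1<\e\}$ and $\rho = 1/(\e-\rho_0-\rho_1)$, so the exhaustion property is automatic; and for $\rho_0$ it uses $1/h_0=\exp(-\log h_0)$, a convex increasing function of the psh function $-\log h_0$, which is psh and \emph{vanishes} along $Z$ instead of diverging to $-\infty$.

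Second, and more fundamentally, you do not engage with the central difficulty. Because every function of the Hodge norm on $B$ degenerates along $Z$, one needs a second ingredient that detects the period map of the limiting mixed Hodge structure on the stratum $Z_1^*$ containing $A$: the paper takes $\rho_1=-\log h$ with $h=-\tRe\,Q(\eta_0,\oleta_\infty)$ an explicit extension to $X$ of the Hodge norm of the \emph{boundary} stratum (Theorem \ref{T:h}). This $\rho_1$ is psh on $Z\cap X$ but in general \emph{not} psh on $X$, and the entire technical content of \S\S\ref{S:mindeg}--\ref{S:deg3} is the term-by-term asymptotic analysis showing that the failure of plurisubharmonicity of $\rho_1$ near $Z$ is dominated by the positivity coming from $\rho_0$, with the null directions of $\ddb(\rho_0+\rho_1)$ coming out to be exactly the tangents to fibres of $\PhiS$. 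Your appeal to the extended Chern form being a positive current with logarithmic singularities along $Z$ cannot substitute for this: positivity of a current concentrated along $Z$ gives no strict positivity in directions tangent to $Z$ and transverse to the fibres of $\PhiS\big|_Z$, which is precisely what the conjecture demands. The final paragraph of your proposal, in which you say you ``would compute'' the $\tSL_2$-orbit approximations and ``match the null directions by a direct case check,'' is exactly the proof that is missing.
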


\noindent The conjecture holds in the case that $D$ is hermitian symmetric, the case that $A \subset B$, and the case that $A$ is a connected component of $Z$, \cite{Robles-extnHnorm}.   The purpose of this note is to prove the conjecture in the following special, but nontrivial, case.  Let $Z_I^* = Z_I \bs \cup_{j\not\in I}Z_j$ denote the open strata of $\olB$.

\begin{theorem}\label{T:pseudocnvx}
Suppose that the Mumford--Tate domain $D$ parameterizes weight $\sfw=2$, effective, polarized Hodge structures with $p_g = h^{2,0} = 2$.  Assume that the fibre $A$ is contained in a codimension 1 strata $Z_i^*$.  Then Conjecture \ref{conj:pseudocnvx} holds.
\end{theorem}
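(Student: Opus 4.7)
My proposed strategy is to build $\rho$ from the Hodge norm of a holomorphic trivializing section of (a tensor power of) the top Hodge line bundle $\det\cF^2$, plus a small multiple of a boundary-distance function to ensure properness. By Theorem~\ref{T:ggr}\eref{i:triv} applied with $p = \sfw = 2$, after shrinking $X$ about $A$ we may fix $m \geq 1$ and a nowhere-vanishing holomorphic section $\sigma \in H^0(X, \det(\cF^2)^{\otimes m})$. Let $h$ be the polarization-induced Hodge metric on $\det\cF^2$ (extended to its tensor powers), and set
$$\rho_0 := -\tfrac{1}{m}\log h(\sigma,\sigma) \quad\text{on } X \cap B.$$
Poincar\'e--Lelong gives $\bi\ddb\rho_0 = c_1(\det\cF^2, h)$, and Griffiths' curvature formula identifies the right-hand side, paired with a horizontal tangent vector $v$, with $\|d\Phi(v)|_{\cF^2}\|^2 \geq 0$. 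Because $\sfw = 2$, the polarization recovers the full filtration from its top piece as $\cF^1 = (\cF^2)^{\perp_Q}$, so the motion of $\cF^2$ alone determines the period map, and $d\Phi(v)|_{\cF^2} = 0$ iff $v$ is tangent to a fiber of $\Phi$ (equivalently of $\PhiS$).

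The substantive step is extending this picture across $Z_i^* \cap X$. Choose local coordinates $(t_1,t') = (t_1,t_2,\ldots,t_n)$ on $\olB$ with $Z_i^* = \{t_1 = 0\}$, and let $N$ be the logarithm of the unipotent local monodromy. Schmid's nilpotent orbit theorem writes
$$\cF^2(t) = \exp\!\left(-\tfrac{\log t_1}{2\pi\bi}\,N\right)\tilde\cF^2(t_1,t'),$$
with $\tilde\cF^2$ extending holomorphically across $\{t_1 = 0\}$. The hypotheses $\sfw = 2$ and $\tdim\cF^2 = p_g = 2$ drastically constrain the situation: $N^3 = 0$ on $V_\bC$, the Jordan type of $N$ has only a few possibilities, and the induced limit mixed Hodge structure on $\tilde\cF^2(0,t')$ falls into a short enumerable list of cases. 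A direct nilpotent-orbit computation in each case produces an asymptotic expansion
$$h(\sigma,\sigma)(t_1,t') \;=\; c(t',\bar t{}')\,(-\log|t_1|)^{-k}\bigl(1+o(1)\bigr),$$
with $c$ smooth and positive and $k \geq 0$ an integer read off from the limit MHS. Consequently $\rho_0$ extends continuously to $X$, and $\bi\ddb\rho_0$ extends as a closed positive $(1,1)$-current whose vanishing directions along $Z_i^* \cap X$ are precisely those tangent to the $\PhiS$-fiber through each point (i.e., the directions along which the associated graded of the limit MHS is constant).

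Finally set $\rho := \rho_0 + \e\tau$, where $\tau$ is a smooth plurisubharmonic function on $X$ tending to $+\infty$ at $\partial X$ (say, a regularization of $-\log\delta(\cdot,\partial X)$ for any fixed K\"ahler metric on $\olB$) and $\e > 0$ is small. This is a continuous psh exhaustion, and $\bi\ddb\tau$ is strictly positive in the directions transverse to $\partial X$, which are disjoint from the null-space of $\bi\ddb\rho_0$ along $A$; thus the combined null-space remains exactly the tangent directions to $\PhiS$-fibers. The principal obstacle is the boundary analysis of the preceding paragraph: controlling $h(\sigma,\sigma)$ uniformly on $Z_i^* \cap X$ and, more delicately, identifying the vanishing locus of $\bi\ddb\rho_0$ at boundary points with the $\PhiS$-fiber directions there. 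The codimension-one hypothesis reduces local monodromy to a single $N$, and the $p_g = 2$ hypothesis bounds the Jordan complexity of $N$ and of the limit MHS, which is what makes an explicit case analysis feasible.
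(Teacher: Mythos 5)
There is a genuine gap, and it sits exactly where you flag ``the substantive step.''  Your interior analysis on $B\cap X$ is fine (for $\sfw=2$ the flag is recovered from $\cF^2$ via $\cF^1=(\cF^2)^{\perp_Q}$, so the curvature of $\det\cF^2$ does detect $\ker d\Phi$), but the boundary behavior is not what you assert.  First, $\rho_0=-\tfrac1m\log h(\sigma,\sigma)$ does \emph{not} extend continuously across $Z_1^*\cap X$: by the Schmid/Cattani--Kaplan--Schmid norm estimates, the Hodge norm of a trivializing section of the canonically extended $\det\cF^2$ behaves like $(-\log|t_1|)^{k}$ with $k=\sfm-4\in\{1,2,3,4\}$ strictly positive for each of the non--Hodge--Tate limiting mixed Hodge structures occurring here (your expansion has the exponent with the wrong sign, but with either sign $-\log h(\sigma,\sigma)$ diverges like $\log\log(1/|t_1|)$ unless $k=0$).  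In particular this $\rho_0$ is not continuous on $X$, and since it diverges uniformly along $Z$ it cannot isolate $A$ inside $Z\cap X$ (property \S\ref{S:prf-pcnvx}\ref{i:A}).  Second, and more seriously, the claim that the degeneracy directions of $\bi\ddb\rho_0$ along $Z_1^*$ are exactly the $\PhiS$--fiber directions is the statement that fails for non-hermitian $D$: the curvature of $\det\cF^2$ degenerates in the $Z$--tangent directions as $t_1\to0$, and the positivity one needs along the stratum is carried by the Hodge norm of the \emph{graded} limiting mixed Hodge structure, i.e.\ by the pairing $-\tRe\,Q(\eta_0,\oleta_\infty)$, which is not the limit of $Q(\eta_0,\oleta_0)$.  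This is why the paper uses \emph{two} functions --- $\rho_0=1/h_0$ (the reciprocal of the ambient Hodge norm: continuous, psh, vanishing on $Z\cap X$) and $\rho_1=-\log h$ with $h$ the separately constructed continuous extension of the boundary Hodge norm from Theorem \ref{T:h}, which is psh only on $Z\cap X$ --- and why the bulk of \S\S\ref{S:mindeg}--\ref{S:deg3} is a multi-step asymptotic argument showing that the possibly negative contributions of $\ddb\rho_1$ are dominated, order by order in $t_1,\olt_1,\log|t_1|^2$, by the $(\log|t_1|^2)^j/h_0^3$ terms of $\ddb\rho_0$.  A single curvature identity does not see this cancellation, and the case analysis you defer to is not a routine verification: it is the proof.

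The closing step is also circular.  A psh function on $X$ blowing up at $\partial X$ (a ``regularization of $-\log\delta(\cdot,\partial X)$'') exists precisely when $X$ is pseudoconvex, which is what is to be proved; moreover adding $\e\tau$ would destroy the required identification of the Levi-null directions with $\PhiS$--fiber directions at interior points of $X$ where $\tau$ contributes positivity transverse to nothing relevant.  The paper instead \emph{defines} the neighborhood as a sublevel set $X=\{\rho_0+\rho_1<\e\}$ of the constructed psh function and takes $\rho=1/(\e-\rho_0-\rho_1)$, with properness coming from $A=\{\rho_0+\rho_1=0\}$.
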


\begin{remark}[Regularity]
When $D$ is hermitian, the exhaustion function will be smooth \cite{Robles-extnHnorm}.  In general, $\rho$ will be $\cC^1$, but not $\cC^2$.  Then the inequality $\ddb\rho(v,\overline v) \ge0$ of Conjecture \ref{conj:pseudocnvx} should be understood to allow $\ddb\rho(v,\overline v) =+\infty$.  The latter may arise when $v$ is normal $Z$, cf.~\S\S\ref{S:psh1}, \ref{S:psh2} and \ref{S:psh3}.  In this case $\bi\ddb\rho$ is a positive current.
\end{remark}

\begin{remark}[Strict psh] 
The exhaustion function $\rho : X \to \bR$ will be the $\hPhiS$--pullback of a continuous function $\varrho$ on
\[
  \sX \ = \ \hPhiS(X) \ \subset \ \hatP \,.
\]
The assertion that $\ddb \rho( v , \overline v) \ge 0$, with equality precisely when $v$ is tangent to a fibre of $\PhiS$, should be interpreted as saying that $\varrho$ is a \emph{strictly} psh function on $\sX$.  This is ``interpretative'' because the topological space $\hatP$ is not yet shown to be complex analytic.  What we can say is that the space $\hatP$ is a finite union $\cup\,\hatP_\pi$ of quasi-projective varieties \cite{GGRinfty}, and the restriction $\varrho\big|_{\hatP_\pi}$ is strictly psh.
\end{remark}

\begin{remark}[Related work]
Griffiths and Schmid showed that $D$ admits a smooth exhaustion function whose Levi form, restricted to the horizontal subbundle of the holomorphic tangent bundle, is positive definite at every point \cite[(8.1)]{MR0259958}.    In particular, the image of the lift $\tilde\Phi : \tilde B \to D$ to the universal cover of $B$ admits a strict psh exhaustion function.
\end{remark}

\section{Proof of Theorem \ref{T:pseudocnvx}: preliminaries}

\subsection{The basic idea} \label{S:prf-pcnvx}

Define
\[
  \Lambda \ = \ \tdet(\cF^\sfw) \,\ot\, \tdet(\cF^{\sfw-1}) 
  \,\ot\cdots
  \ot\, \tdet(\cF^{\lceil(\sfw+1)/2\rceil}) \,.
\]
Theorem \ref{T:ggr}\emph{\ref{i:triv}} implies $\Lambda^{\ot m}$ is trivial over $X$ for some positive integer $m\ge1$.

\begin{remark}
For simplicity, we will assume that $m=1$ in the proof of Theorem \ref{T:pseudocnvx}.
\end{remark}

\noindent  We will construct two functions $\rho_0 , \rho_1 : X \to \bR$ with the following properties:
\begin{i_list}
\item \label{i:rho0}
The function $\rho_0$ is psh on $X$, and vanishes along $Z \cap X$.
\item
The restriction $\rho_1\big|_{Z \cap X}$ is psh.
\item \label{i:psh}
The sum $\rho_0 + \rho_1 : X \to \bR$ is psh.  In fact, $\ddb (\rho_0+\rho_1)( v , \overline v) \ge 0$, with equality precisely when $v$ is tangent to a fibre of $\PhiS$.
\item \label{i:A}
We have $\rho_0 + \rho_1 \ge 0$, and the fibre is characterized by 
\[
	A \ = \ \{ \rho_0 + \rho_1 \ = \ 0 \} \,.
\]
\end{i_list}
Then for sufficiently small $\e>0$ we may take $X = \{ \rho_0+\rho_1 < \e\}$ and $\rho = 1/(\e-\rho_0-\rho_1)$.

There are (at least) two possibilities for the function $\rho_0$.  The triviality of $\Lambda\big|_X$ and \cite[Theorem 6.14]{BBT18} yields holomorphic functions $g_1,\ldots,g_\m \in \cO(X)$ that separate the fibres of $\Phi\big|_{B \cap X}$ and have the property that $V(g_1,\ldots,g_\m) = Z \cap X$.  In particular, the psh function
\begin{equation} \label{E:rho0a}
  \rho_0 \ = \ |g_1|^2 \,+\,\cdots\,+\, |g_\m|^2
\end{equation}
descends to $\sX = \hPhiS(X)$, is strictly psh on the complex analytic variety 
\[
  \sX_0 \ = \ \hPhiS(B \cap X)
\]
(which is dense in $\sX$), and vanishes along $\hPhiS(Z \cap X) = \sX \bs \sX_0$.  This choice of $\rho_0$ will work well when $\rho_1$ is psh on all of $X$.  (This includes the case the $D$ is hermitian \cite{Robles-extnHnorm}, and a few others, including the two in \S\ref{S:lastdeg}.)

In general it appears that the $\rho_1$ constructed here will not be psh on all of $X$ (\S\S\ref{S:mindeg}-\ref{S:deg3}).  In this case, we do not know enough about the vanishing of the $g_j$ along $Z \cap X$ to conclude that $\rho_0 + \rho_1$ is psh.  Instead we let $h_0$ be the Hodge norm-squared of a trivialization of $\Lambda\big|_{B\cap X}$.  Then
\begin{equation}\label{E:rho0b}
   \rho_0 \ = \ 1/h_0 \,:\, X \ \to \ \bR
\end{equation}
is psh, non-negative with vanishing locus $\{ \rho_0 = 0 \} = Z \cap X$, and descends to a strictly psh function on $\sX_0$ that vanishes along $\sX \bs \sX_0$.  The advantage of \eqref{E:rho0b} over \eqref{E:rho0a} is that the asymptotic behavior of the former is very well understood, and we will be able to show that $\rho_0 + \rho_1$ is psh.  (The advantage of \eqref{E:rho0a} is that it is smooth on all of $X$.  The function \eqref{E:rho0b} will be continuous, but not smooth in general, cf.~\S\S\ref{S:h1}, \ref{S:h2}, \ref{S:h3}.)

The function $\rho_1$ will be realized as $-\log h$ with $h : X \to \bR$ an extension of the Hodge norm on $Z \cap X$.  

\subsection{Extension of Hodge norms}
There is a Hodge metric associated to each $\Lambda\big|_{Z_I^*}$ that is canonically defined up to a positive multiple.  Fix a trivialization of $\Lambda\big|_X$ and let $h_I : Z_I^* \cap X \to \bR_{>0}$ be the Hodge norm-squared of the trivialization.  Then $-\log h_I$ is a smooth psh function \cite{MR0259958}.

\begin{theorem}[{\cite{Robles-extnHnorm}}] \label{T:h}
The neighborhood $X$ of Theorem \ref{T:ggr} may be chosen so that it admits a continuous function $h : X \to \bR$ that is smooth on strata $Z_I^* \cap X$ \emph{(}including $B \cap X$\emph{)}, constant on $\hPhiS$--fibres, and has the following property: if $Z_I^* \cap A$ is nonempty, then the restriction of $h$ to $Z_I^*$ is a multiple of the Hodge norm-squared $h_I$.  In particular the restriction of $-\log h$ to $Z_I^*$ is plurisubharmonic.
\end{theorem}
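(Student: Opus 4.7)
The plan is to construct $h$ by renormalizing the Hodge norm-squared $h_B$ of a global trivialization $\sigma$ of $\Lambda|_X$ (provided by Theorem \ref{T:ggr}\emph{\ref{i:triv}}) on $B\cap X$, cancelling its logarithmic divergence along $Z$. In local coordinates $(z_1,\ldots,z_k,w)$ adapted to $Z$ near a point of $A$, Schmid's nilpotent orbit theorem and the $\tSL_2$-orbit theorem of Cattani--Kaplan--Schmid give an asymptotic expansion of $h_B$ as a continuous factor times a product of powers of $-\log|z_i|$, with exponents determined by the monodromy weight filtration of the limiting mixed Hodge structure. Defining $h$ to be $h_B$ divided by these logarithmic factors yields a function that extends continuously across $Z$ and whose restriction to each stratum $Z_I^*\cap X$ meeting $A$ is a positive multiple of the Hodge norm-squared $h_I$ for that stratum; on the open locus $B\cap X$ the correction factor is smooth and nonvanishing, so $h$ is smooth there as well, and similarly on each deeper stratum.

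Two remaining properties need verification. The restriction $-\log h|_{Z_I^*}$ is psh because it equals $-\log h_I$ up to an additive constant, and $-\log h_I$ is smooth psh by the Griffiths--Schmid curvature estimate \cite[(8.1)]{MR0259958} applied to the Hodge line bundle $\Lambda|_{Z_I^*}$. The constancy of $h$ on $\hPhiS$--fibres follows from two facts: over $B\cap X$, $h_B$ descends through the Stein factorization, being the norm-squared of a section of a line bundle pulled back from the algebraic image $\wp$; and the logarithmic correction depends only on the monodromy weight filtration attached to each limiting mixed Hodge structure, which is constant along $\hPhiS$--fibres by the construction of the SBB-type completion in \cite{GGRinfty}.

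The main technical obstacle is the global coherence of the correction factor: the exponents in the logarithmic correction are defined locally using the $\tSL_2$-orbit data at a single point of $A$, and one must verify that they are constant along $A$ (and nearby boundary fibres of $\hPhiS$) so that the local constructions glue into a single well-defined function on a neighborhood $X$ of $A$. This amounts to a compatibility statement for the family of nilpotent cones parameterized by $A$, which is precisely what is built into the construction of $\hPhiS$ in \cite{GGRinfty}; once this compatibility is in hand, shrinking $X$ if necessary gives the required neighborhood.
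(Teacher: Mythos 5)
This paper does not itself prove Theorem \ref{T:h}: the statement is quoted from the companion paper \cite{Robles-extnHnorm}, and what the present paper discloses about that proof is the actual construction of $h$, namely $h=-\tRe\,Q(\eta_0,\oleta_\infty)$, a pairing of the trivializing section $\eta_0$ of $\Lambda$ against a second section $\eta_\infty$ manufactured from the limiting mixed Hodge structure (the vector $e_\infty$), all expressed through the period matrix representation. Your route --- dividing the Hodge norm $h_0=Q(\eta_0,\oleta_0)$ by the product $\prod_i(-\log|z_i|)^{k_i}$ supplied by the Cattani--Kaplan--Schmid norm estimates --- is genuinely different, and it does capture the right leading asymptotics (the computations in \S\ref{S:h1} show $h_0$ and $h$ agree to leading order after exactly such a division, with limit the stratum Hodge norm $q_1(\a)$). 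But as written it has two gaps.

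First, well-definedness: $(-\log|z_i|)^{k_i}$ depends on the choice of local defining equation $z_i$, and replacing $z_i$ by $uz_i$ with $u$ a unit changes $-\log|z_i|$ by a bounded additive term; two charts therefore produce candidate functions whose ratio tends to $1$ along $Z_I$ but is not identically $1$, so the local constructions do not glue. The coherence issue you flag concerns only the constancy of the exponents $k_i$ along $A$, which is not the obstruction; globalizing the correction (say via a metric on $\cO(Z_i)$) introduces an auxiliary choice and still leaves the second problem. Second, constancy on $\hPhiS$--fibres: your argument covers $h_0$ and the exponents but not the values $-\log|z_i|$ themselves. A fibre of $\hPhiS$ near $A$ is a compact connected subvariety of $\olB$ on which a globalized $-\log\|s_i\|^2$ is locally $-\log|f|^2$ plus a smooth metric term, and only the first summand is forced to be constant on a compact fibre. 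This is precisely what the pairing construction is designed to handle: $\eta_0$ and $\eta_\infty$ are built from period data that factors through $\hPhiS$, so $h=-\tRe\,Q(\eta_0,\oleta_\infty)$ is constant on fibres automatically; repairing your approach would essentially force you to replace $-\log|z_i|$ by a function of the period data, i.e.\ to rediscover $\eta_\infty$.
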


\begin{remark}
If the Mumford--Tate domain $D$ is hermitian, then $h$ is smooth and $-\log h$ is psh \cite{Robles-extnHnorm}.  For an arbitrary Mumford--Tate domain, both smoothness and plurisubharmonicity may fail, cf.~\S\S\ref{S:mindeg}-\ref{S:deg3}.  (Although, the restriction $h\big|_{Z_J^* \cap X}$ is always smooth for all strata $Z_J^*$.)
\end{remark}

\subsection{Weight $\sfw=2$ period domain with $p_g=2$} \label{S:wt2}

Our proof of Theorem \ref{T:pseudocnvx} will make liberal use of the companion paper \cite{Robles-extnHnorm}, especially \S\S\ref{extn-S:prelim}--\ref{extn-S:prfTh} of that paper, and we follow the notation there. 

Let $D$ be the period domain parameterizing weight $\sfw=2$, polarized Hodge structures on a rational vector space $V$ with Hodge numbers $\bfh(V) = (2,\sfh,2)$.  The induced Hodge structure on 
\[
  H \ = \ \tw^2 V
\]
has weight $\sfn=2$ and Hodge numbers $\bfh(H) = (1 \,,\, 2 \sfh \,,\, \half \sfh(\sfh-1)+4 \,,\, 2\sfh \,,\, 1)$.  We assume that the fibre $A$ is contained in a codimension one strata $Z_i^* \subset Z$; without loss of generality $i=1$ and 
\[
  A \subset Z_1^* \,.
\]

Let $\rho_1 = -\log h$, with $h$ given by Theorem \ref{T:h}.  The problem is to show that we may choose a psh $\rho_0$ so that the conditions \ref{i:psh} and \ref{i:A} of \S\ref{S:prf-pcnvx} are satisfied.  As discussed in \S\ref{S:prf-pcnvx} our choice of $\rho_0$, will depend on properties of $\rho_1$.  Both $\rho_0$ and $\rho_1$ are defined in  terms of the period matrix representation (\cite[\S\ref{extn-S:pmr}]{Robles-extnHnorm}) over $X$ and so are constant on fibres of $\hPhiS$.  It suffices verify \S\ref{S:prf-pcnvx}\ref{i:psh}-\ref{i:A} in a local coordinate chart $(U,t)$ centered at a point $b \in A$. (Here we follow the notation of \cite[\S\ref{extn-S:loc1}]{Robles-extnHnorm}.)  Without loss of generality the coordinates satisfy 
\[
  U \,\cap\, Z_1 \ = \ \{ t_1=0\} \,.
\]

There are are five possible ``types'' for the limiting mixed Hodge structure $(W,F,N)$ arising along $U \cap Z_1$ \cite{MR4012553, MR3701983}.  The proof of Theorem \ref{T:pseudocnvx} is explicit case-by-case analysis, one for each type, in \S\S\ref{S:mindeg}--\ref{S:lastdeg}, respectively.  These types are indexed by the Hodge numbers $\bfh_\ell$ of the pure, weight $\ell$ Hodge structure on $\tGr^W_\ell(V)$ determined by $F$.  It is convenient to visually represent each of these types by the associated Hodge diamond (\S\ref{S:hd}).

In general, the mixed Hodge structure $(W,F)$ is not $\bR$--split.  It will be convenient in the computations that follow to assume that we have replaced $(W,F)$ with an $\bR$--split mixed Hodge structure $(W,\tilde F)$ given by $\tilde F = \exp(\d) \cdot F$ with $\d \in \op_{p,q\le-1}\,\fg^{p,q}_{W,F}$.  (This may always been done, \cite{MR840721}.)  After this replacement, it will still be the case that the assertions of \cite[\S\ref{extn-S:Aloc}]{Robles-extnHnorm} hold.

Throughout we will make frequent use of the $Q$--isotropy of $W$
\begin{subequations} \label{SE:Wn=2}
\begin{eqnarray} 
  Q \left( W_\ell(V) \,,\, W_m(V) \right) & = &  0 \,,
  \quad \forall \ \ell+m < 4 \,,  \\
  Q \left( W_\ell(H) \,,\, W_m(H) \right) & = &  0 \,,
  \quad \forall \ \ell+m < 8 \,;
\end{eqnarray} 
as well as
\begin{equation}
  N(W_\ell) \ \subset \  W_{\ell-2} \,.
\end{equation}
\end{subequations}

\section{Proof of Theorem \ref{T:pseudocnvx} for the minimal degeneration} \label{S:mindeg}

A minimal degeneration $(W,F,N)$ has Hodge numbers $\bh_0 = (0)$, $\bh_1  =  (1,1)$ and  $\bh_2 =  (1,\sfh-2,1)$.  The associated Hodge diamond $\Diamond(V)$ is
\begin{equation} \label{E:hd1}
\begin{tikzpicture}[baseline=(current  bounding  box.center)]
  \node [above] at (1,2.2) {};
  \draw [<->] (0,2.5) -- (0,0) -- (2.5,0);
  \draw [gray] (1,0) -- (1,2);
  \draw [gray] (2,0) -- (2,2);
  \draw [gray] (0,1) -- (2,1);
  \draw [gray] (0,2) -- (2,2);
  \draw [fill] (0,2) circle [radius=0.07];
  \node [left] at (0,2) {$v_6$};
  \draw [fill] (0,1) circle [radius=0.07];
  \node [left] at (0,1) {$v_5$};
  \draw [fill] (1,2) circle [radius=0.07];
  \node [above] at (1,2) {$v_3$};
  \draw [fill] (1,1) circle [radius=0.07];
  \node [above right] at (1,1) {$v_r$};
  \draw [fill] (1,0) circle [radius=0.07];
  \node [below] at (1,0) {$v_4$};
  \draw [fill] (2,1) circle [radius=0.07];
  \node [right] at (2,1) {$v_2$};
  \draw [fill] (2,0) circle [radius=0.07];
  \node [below] at (2,0) {$v_1$};
\end{tikzpicture}
\end{equation}
See \eqref{E:hd2-1} for the Hodge diamond $\Diamond(H)$.

We may choose a basis of $\{v_1,\ldots,v_\sfd\}$ of $V_\bC$ so that:  the polarization satisfies $Q(v_a,v_b) = \d^7_{a+b}$ for all $1 \le a,b\le 6$, $Q(v_r,v_s) = \d_{rs}$ for all $7 \le r,s$, and all other pairings are zero; the underlying real structure is $\overline{v_1} = -v_6$, $\overline{v_2} = v_3$, $\overline{v_4} = v_5$ and $\overline{v_r} = v_r$; the nilpotent operator is given by
\begin{equation} \label{E:Nsp}
  N \ = \ \bi(v_4 \ot v^2 - v_5 \ot v^3) \ \in \ \tEnd(V_\bR,Q) \,;
\end{equation}
the Hodge filtration is $F^2(V_\bC) = \tspan_\bC\{v_1,v_2\}$, and the weight filtration is $W_1(V_\bC) = \tspan_\bC\{ v_4 , v_5 \}$.  We have $e_0 = v_1 \wedge v_2$ and $e_\infty = v_1\wedge v_4$, in the notation of \cite[\S\S\ref{extn-S:pmr}-\ref{extn-S:einfty}]{Robles-extnHnorm}.
 
\subsection{Period matrix representation}

As discussed in \cite[\S\ref{extn-S:pmr}]{Robles-extnHnorm}, over $B \cap X$ we have $F^2_\Phi = \tspan_\bC\{ \x_1 \,,\, \x_2 \}$ where 
\[
  \x_a \ = \ v_a \ + \ \sum_{j\ge 3} \x_a^j\,v_j \,,\quad a=1,2 \,.
\]
The $\x_a^j : B \cap X \to \bC$ are holomorphic and defined up to the action of the monodromy $\Gamma_X$.  The
\begin{eqnarray*}
  \x_3 & = & v_3 \,-\, \x_2^4\,v_5 \,-\, \x_1^4\,v_6 \,,\qquad
  \x_4 \ = \ v_4 \,-\, \x_2^3\,v_5 \,-\, \x_1^3\,v_6 \\
  \x_r & = & v_r \,-\, \x_2^r\,v_5 \,-\, \x_1^r\,v_6 
  \,,\quad r \ge 7 \,.
\end{eqnarray*}
extend $\{\x_1,\x_2\}$ to a framing of $F^1(\Phi)$. 

By \cite[\eqref{extn-SE:A}]{Robles-extnHnorm}, the fibre $A \subset Z$ is cut out by 
\begin{equation}\label{E:A1}
  A \ = \ \{ \x^3_1 , \x^3_2 , \x^r_1 , \x^6_1 = 0 \} \,.
\end{equation} 
We have $\eta_0 = \x_1 \wedge \x_2$ and $\eta_\infty = \x_1\wedge \x_4$, in the notation of \cite[\S\S\ref{extn-S:pmr}, \ref{extn-S:dfnh}]{Robles-extnHnorm}.

\subsection{Matrix coefficients in local coordinates}\label{S:matcoef1}

As discussed in \cite[\S\ref{extn-S:loc1}]{Robles-extnHnorm}, we have $\x_j = \exp(\ell(t_1)N) \z(t)\cdot(v_j)$, with $\z : U \to \exp(\fs_F^\perp)$ holomorphic.  We have $\fs_F^\perp = (\fs_F^\perp \cap \fz_N) \op \bC(v_3 \ot v_1 -  v_6 \ot v^4)$, with $\fz_N$ the centralizer of $N$ in $\fg_\bC$.  We may factor $\z = \exp \x^3_1(v_3 \ot v^1 -  v_6 \ot v^4) \cdot \a$ with $\a : U \to \exp(\fs_F^\perp \cap \fz_N)$ holomorphic.  Set $\a_j = \a(v_j)$, and define $\a_j^i \in \cO(U)$ by $\a_j = \a_j^i\,v_i$.  We have $\x_4^2 = \a^4_2 + \bi\ell(t)$.  After a change of coordinates $t_1 \mapsto \exp(2\pi\a^4_2)\,t_1$, we have $\a^4_2 = 0$.  Then
\begin{eqnarray*}
  \x_1 & = & \a_1 \,+\, \x^3_1\, \left( 
  	\b_{1,3} \,+\, \ell(t_1) N \b_{1,3} \right) \,,\\
  \x_2 & = & \a_2 \,+\, \ell(t_1) N \a_2 \,,\qquad
  \x_4 \ = \ -\bi N \a_2 \,-\, \x^3_1\,v_6 \,,
\end{eqnarray*}
with 
\[
  \b_{1,3} = (v_3 - \a^4_1\,v_6) : U \to W_3(V_\bC)
\]
holomorphic.  The condition \cite[\eqref{extn-E:locW}]{Robles-extnHnorm} is equivalent to the vanishing of $\x^3_1$ along $U \cap Z = \{t_1=0\}$.  In particular,
\begin{equation}\label{E:n31}
  \xi^3_1 \ = \ t_1\,\n^3_1
\end{equation}
for some holomorphic $\n^3_1 : U \to \bC$.  

\subsection{The sections $\eta_0$ and $\eta_\infty$ in local coordinates}

Set 
\begin{eqnarray*}
  \a_{0,5} & = & \a(v_1\wedge v_2) : U \ \to \ W_5(H_\bC) \\
  \b_{0,6} & = & \b_{1,3} \wedge \a_2 : U \ \to \ W_6(H_\bC) \\
  \b_{\infty,5} & = & v_6 \wedge \b_{1,3}
  \ = \ v_3 \wedge v_6: U \ \to \ W_5(H_\bC) \\
  \b_{\infty,4} & = & -\bi\,
  \b_{1,3}\wedge( N \a_2) \,-\, \a_1 \wedge v_6 : 
  U \ \to \ W_4(H_\bC) \,.
\end{eqnarray*}
Note that $N^3\b_{0,6}, N^2 \b_{\infty,4} , N^2\b_{\infty,5}  = 0$, $\b_{0,6} \equiv v_3 \wedge  v_2$ modulo $W_5(H_\bC)$, and  $N^2\b_{0,6} = 2\bi N \b_{\infty,4} = 2 v_5 \wedge v_4$.  We have
\begin{eqnarray*}
  \eta_0
  & = & \a_{0,5} \,+\, \ell(t_1) N \a_{0,5} \ + \ 
  \x^3_1 \left(
    \b_{0,6} \,+\, \ell(t_1) N \b_{0,6} \,+\, 
    \half \ell(t_1)^2 N^2 \b_{0,6}
  \right) \\
  \eta_\infty
  & = & 
	-\bi N\a_{0,5} \,+\, 
	\x^3_1 \left( 
		\b_{\infty,4} \,+\, \ell(t) N \b_{\infty,4}
	\right) \,+\,
	(\x^3_1)^2 \left(
		\b_{\infty,5} \,+\, \ell(t) N \b_{\infty,5}
	\right) \,.
\end{eqnarray*}

\subsection{The Hodge norms in local coordinates} \label{S:h1}

The extension of the Hodge norm on $Z \cap X$ to $X$ is $h = -\tRe\,Q(\eta_0 , \oleta_\infty)$, cf.~\cite{Robles-extnHnorm}.  Keeping \eqref{SE:Wn=2} in mind, in local coordinates we have
\begin{eqnarray*}
  h & = & 
  - \bi Q(\a_{0,5} , N \ola_{0,5}) 
  \,+\, \tfrac{1}{2\pi}\,|\x^3_1|^2 \, \log |t_1|^2 
  \\ & & + \  
  \bi \, \x^3_1 \, 
  Q(\ola_{0,5} \,,\, N \b_{0,6} )
  \ + \  
  \half \, \x^3_1 \, 
  Q(\ola_{0,5} \,,\, \a_1\wedge v_6 - v_5  \wedge \a_2 ) 
  \\ & & - \  
  \bi \, \olx^3_1 \, 
  Q(\a_{0,5} \,,\, N \olb_{0,6} )
  \ + \  
  \half \,\olx^3_1 \, 
  Q(\a_{0,5} \,,\, 
  	\ola_1\wedge \olv_6 - \olv_5  \wedge \ola_2 ) 
  \\ & & 
  - \ \half \, |\x^3_1|^2 
  \left\{ Q(\b_{0,6} , \olb_{\infty,4})
  	\,+\, Q( \olb_{0,6} , \b_{\infty,4} ) \right\}
  \\ & & - \ 
  \half\,(\x^3_1)^2\,
  Q\left(\ola_{0,5} + \olx^3_1\,\olb_{0,6} \,,\, 
  	\b_{\infty,5} - 
	\tfrac{\bi}{2\pi} \log |t|^2 \, N\b_{\infty,5}
  \right)
  \\ & & - \ 
  \half\,(\olx^3_1)^2\,
  Q\left(\a_{0,5} + \x^3_1\,\b_{0,6} \,,\, 
  	\olb_{\infty,5} + 
	\tfrac{\bi}{2\pi} \log |t|^2 \, N\olb_{\infty,5}
  \right) \,.
\end{eqnarray*}

The Hodge norm on $B \cap X$ is $h_0 = Q(\eta_0,\oleta_0)$.  Again, keeping \eqref{SE:Wn=2} in mind, in local coordinates we have
\begin{eqnarray*}
  h_0 & = & 
  \tfrac{\bi}{2\pi}\,\log |t_1|^2 \, Q(\a_{0,5} , N\ola_{0,5})
  \ - \ \tfrac{1}{4\pi^2}\,(\log |t_1|^2)^2 \, |\x^3_1|^2 
  \\ & & + \
  \tfrac{\bi}{2\pi} \, \log |t_1|^2 \,
  \Big(
    \olx^3_1 \, Q(\a_{0,5} , N\olb_{0,6})
    \,-\, \x^3_1 \, Q(\ola_{0,5} , N \b_{0,6})
  \Big)
  \\ & & 
  + \ Q \Big( \a_{0,5} + \x^3_1\,\b_{0,6} \,,\,
  	\overline{\a_{0,5} + \x^3_1\,\b_{0,6}} \Big)
  \ + \ 	
  \tfrac{\bi}{2\pi} \, \log |t_1|^2 \, |\x^3_1|^2 \,
  	Q(\b_{0,6} , N\olb_{0,6}) \,.
\end{eqnarray*}

\subsection{Plurisubharmonicity} \label{S:psh1}

Set $\rho_0 = 1/h_0$ and $\rho_1 = -\log h$. We know that $\rho_0$ is psh on $X$, and that $\rho_1$ is psh on $Z \cap X$.  The goal of this section is to show that $\rho_0+\rho_1$ is psh on $X$.  Because plurisubharmonicity is a local property, it suffices to show $\rho_0+\rho_1$ is psh on $U$.  We compute 
\begin{eqnarray*}
  \ddb (\rho_0 + \rho_1) & = & 
  \ddb (1/h_0 - \log h) \\
  & = &
  \frac{-h_0 \ddb h_0 \,+\, 
  		2\, \partial h_0 \wedge \dbar h_0}{h_0^3} 
  \ + \ 
  \frac{-h\ddb h \,+\, \partial h \wedge \dbar h}{h^2} \,.
\end{eqnarray*}

\begin{lemma} 
Fix a holomorphic vector field $v$ on $U$ with $\tdv t_1 = 1$.  We have
\[
  \lim_{t_1\to0} \ddbv \, (\rho_0 + \rho_1) 
  \ = \ +\infty \,.
\]
\end{lemma}

\begin{proof}
The lemma follows from \eqref{E:n31} and the calculations in \S\ref{S:h1}.  The key point is that as $t_1 \to 0$, the expression $\ddbv\, (\rho_0 + \rho_1)$ is dominated by the term 
\[
  -\frac{Q(\a_{0,5} , N\ola_{0,5})^2}
  	{2\pi^2\,h_0^3\,|t_1|^2}
	\ = \ 
	-\frac{Q(\a_{0,5} , N\ola_{0,5})^2}
  	{2\pi^2\,h_0^3\,|t_1|^2}  
  	\,\tdv t_1 \wedge \tdbv \,\olt_1 
  \quad\hbox{in}\quad 
  \frac{2\,\dv h_0 \wedge \dbv \,h_0}{h_0^3} \,.
\]
Recall that $-\bi Q(\a_{0,5} , N\ola_{0,5})$ is the Hodge norm along $Z_1 \cap U$; in particular, $-\bi Q(\a_{0,5} , N\ola_{0,5}) > 0$.  Alternatively, one may directly compute
\begin{equation}\label{E:Qa1}
  -\bi Q(\a_{0,5} , N\ola_{0,5}) \ = \ 
  (1 + |\a^6_1|^2 - \tsum_r |\a^r_1|^2) (1 - |\a^3_2|^2) \,.
\end{equation}
By \eqref{E:A1} 
\begin{equation}\label{E:locA1}
  A \,\cap\, U \ = \ 
  \{ t_1\,,\ \a_1^r\,, \a^6_1\,,\ \a^3_2 \ = \ 0 \} \,.
\end{equation}
So \eqref{E:Qa1} is identically 1 along the fibre $A \cap U$.
\end{proof}

\begin{lemma} \label{L:psh1fnt}
Fix a holomorphic vector field $v$ on $U$ with $\tdv t_1 = 0$.  Shrinking $U$ if necessary, we have $\ddbv\, (\rho_0 + \rho_1) \ge 0$.
\end{lemma}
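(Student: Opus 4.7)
The plan is to compute $\ddbv \rho_1$ explicitly from the formulas of $\S\ref{S:h1}$ and reduce the desired inequality on $U$ to the known plurisubharmonicity of $\rho_1$ along $Z_1$, compensating the resulting error term by the psh function $\rho_0$ and the compactness of $A$.

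The tangency condition $\tdv t_1 = 0$ means that both $\dv$ and $\dbv$ annihilate $\log|t_1|^2$; moreover \eqref{E:n31} gives $\dv \x^3_1 = t_1\,\dv \n^3_1$, so every derivative that hits $\x^3_1$ (respectively $\olx^3_1$) extracts a factor of $t_1$ (respectively $\olt_1$). Decomposing $h = h^{(0)} + h^{(1)}$, where $h^{(0)} = -\bi\,Q(\a_{0,5}, N\ola_{0,5})$ is the leading Hodge-norm term along $Z_1$ and $h^{(1)}$ collects the remaining terms of $\S\ref{S:h1}$, each summand of $h^{(1)}$ carries at least one factor of $\x^3_1$ or $\olx^3_1$. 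It then follows that $h|_{t_1=0} = h^{(0)}|_{t_1=0}$ and, by the above observation applied term-by-term, also $\dv h|_{t_1=0}$, $\dbv h|_{t_1=0}$ and $\ddbv h|_{t_1=0}$ all coincide with the corresponding expressions for $h^{(0)}$. Consequently
\[
  \ddbv \rho_1 \big|_{t_1=0}
  \;=\; \ddbv\bigl(-\log h^{(0)}\bigr)\big|_{t_1=0}
  \;\ge\;0\,,
\]
since $-\log h^{(0)} = \rho_1|_{Z_1 \cap U}$ is plurisubharmonic by Theorem \ref{T:h}.

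I would then Taylor-expand $\ddbv \rho_1$ in $t_1$ around $t_1 = 0$. The difference $\ddbv \rho_1 - \ddbv \rho_1|_{t_1 = 0}$ is a continuous function of $t_1$ built from products of $t_1$, $\olt_1$, $\log|t_1|^2$ and smooth functions of the remaining coordinates, each term carrying at least one factor of $|t_1|$. If $v$ is transverse at a point $p \in A$ to the tangent space of the $\PhiS$-fiber through $p$, then $\ddbv \rho_1|_{t_1=0}$ is bounded away from zero near $p$, and---using compactness of $A$---shrinking $U$ makes the error uniformly small, so $\ddbv \rho_1 \ge 0$ on $U$; adding $\ddbv \rho_0 \ge 0$ (which holds everywhere on $X$) gives the claim.

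The main obstacle is the case where $v$ is tangent (or nearly tangent) to a $\PhiS$-fiber, so that $\ddbv \rho_1|_{t_1=0}$ vanishes and the continuity buffer disappears. Here one must exploit the identity
\[
  \ddbv \rho_0 \;=\;
  \frac{-h_0\,\ddbv h_0 \,+\, 2\,|\dv h_0|^2}{h_0^3}
\]
together with the characterization \eqref{E:locA1} of $A \cap U$ and the polarization relations \eqref{SE:Wn=2}: in pure fiber directions the corrections to $\ddbv(-\log h)$ vanish identically, reflecting the fact that $h$ (constructed in Theorem \ref{T:h}) is constant along $\hPhiS$-fibers, so that $\ddbv(\rho_0 + \rho_1) = 0$ occurs precisely where the conjecture predicts it; in directions transverse to the fiber the positive contribution $2|\dv h_0|^2/h_0^3$ to $\ddbv \rho_0$ is strictly positive and, after shrinking $U$ so that $|t_1|$ is small, dominates any residual negative contribution coming from $h^{(1)}$, giving $\ddbv(\rho_0+\rho_1) \ge 0$ throughout $U$.
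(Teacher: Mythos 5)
Your first step is sound and matches the paper's preliminary analysis: since every term of $h$ other than $-\bi\,Q(\a_{0,5},N\ola_{0,5})$ carries a factor of $\x^3_1=t_1\,\n^3_1$ or its conjugate, and $\tdv t_1=0$, one gets $\lim_{t_1\to0}\ddbv\,\rho_1=-\ddbv\log q_1(\a)$ with $q_1(\a)=-\bi\,Q(\a_{0,5},N\ola_{0,5})$, and this limit is $\ge0$; the paper's Step 1 disposes of the case where this limit is strictly positive essentially as you do.

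The gap is in your treatment of the degenerate case. You assert that $\ddbv\,\rho_1|_{t_1=0}$ is bounded away from zero whenever $v$ is transverse to the $\PhiS$--fibre, so that the only remaining case is (near-)tangency to a fibre. This is false: the vanishing of $-\ddbv\log q_1(\a)$ at $t_1=0$ is equivalent to the vanishing of $\tdv\a^r_1,\,\tdv\a^6_1,\,\tdv\a^3_2$ only, whereas tangency to a fibre of $\PhiS$ along $Z_1$ requires the vanishing of \emph{all} the $\tdv\a^i_j$, and for the full period map also of $\tdv\n^3_1$. So there are directions transverse to the fibres --- e.g.\ with $\tdv\a^4_1\ne0$ or $\tdv\a^r_2\ne0$, or with only $\tdv\n^3_1\ne0$ --- in which your leading term vanishes identically and the ``continuity buffer'' gives nothing. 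In those directions the compensating positivity does not come from the term $2|\dv h_0|^2/h_0^3$ you invoke, which is $O\bigl(1/|\log|t_1|^2|^3\bigr)$ and subordinate; it comes from the term $\tfrac{\log|t_1|^2}{2\pi h_0^3}\,q_1(\a)\,\ddbv\,q_0(\a)$ in $\ddbv\,\rho_0$, which is $O\bigl(1/(\log|t_1|^2)^2\bigr)$, and whose favorable sign requires proving $\lim_{t_1\to0}\ddbv\,q_0(\a)\le0$ via the first Hodge--Riemann bilinear relation together with the infinitesimal period relation (which gives $\tdv\a^6_2=-\sum_r\a^r_1\tdv\a^r_2$ and $\tdv\a^5_1=-\a^3_2\tdv\a^4_1$ along $t_1=0$). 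Finally, when all $\tdv\a^i_j$ vanish but $\tdv\n^3_1\ne0$, yet another term, $|t_1|^2\log|t_1|^2\bigl(\tfrac{\log|t_1|^2}{4\pi^2h_0^2}-\tfrac{1}{2\pi h}\bigr)\bigl|\tdv\n^3_1\bigr|^2$, must be isolated and shown to dominate with the correct sign. None of these computations appears in your proposal, and they are the substance of the paper's Steps 2--4.
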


\begin{corollary} \label{C:phs1}
The function $\rho_0 + \rho_1$ is psh on $X$.
\end{corollary}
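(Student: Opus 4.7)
The plan is to deduce plurisubharmonicity of $\rho_0+\rho_1$ on $X$ from three ingredients: (i) the smooth plurisubharmonicity of each summand on the open part $B\cap X$, (ii) the tangential plurisubharmonicity provided by Lemma \ref{L:psh1fnt}, and (iii) the normal blow-up of the complex Hessian supplied by the preceding lemma. Since plurisubharmonicity is a local property, it suffices to verify it on the chart $U$. On $B\cap U = U\setminus\{t_1=0\}$, the function $\rho_0 = 1/h_0$ is psh because $-\log h_0$ is psh by the classical Griffiths--Schmid positivity \cite{MR0259958} of the Hodge metric on the trivialized bundle $\Lambda\big|_{B\cap X}$, and $\rho_1 = -\log h$ is psh by Theorem \ref{T:h} applied to the open stratum $B = Z_\emptyset^*$. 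Hence $\rho_0+\rho_1$ is smoothly psh on $B\cap U$, and the problem reduces to analyzing points of $Z_1^*\cap U$.

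At such a point $p$, I would decompose an arbitrary holomorphic tangent vector as $w = c\,\partial_{t_1}+w'$ with $w'$ in the span of $\{\partial_{t_j}\}_{j\ge 2}$, and consider the Hermitian form $H := \ddb(\rho_0+\rho_1)$. The preceding lemma gives $H(\partial_{t_1},\overline{\partial_{t_1}})\to+\infty$ at rate $|t_1|^{-2}$ as $t_1\to 0$, while Lemma \ref{L:psh1fnt}, after shrinking $U$, gives $H(w',\overline{w'})\ge 0$. For the mixed entries $H_{1j}:= H(\partial_{t_1},\overline{\partial_{t_j}})$, $j\ge 2$, the expansions of $h_0$ and $h$ in \S\ref{S:h1}, together with the identity $\x^3_1 = t_1\,\n^3_1$ of \eqref{E:n31}, yield $|H_{1j}|\le O(|t_1|^{-1}|\log|t_1||)$: every occurrence of $\x^3_1$ carries the factor $t_1$, and each $\bar\partial_{t_j}$-derivative with $j\ge 2$ removes at most one such factor while leaving the $\log|t_1|^2$-factors intact. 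Since $|H_{1j}|^2$ is then of strictly lower order than $H(\partial_{t_1},\overline{\partial_{t_1}})\cdot H(\partial_{t_j},\overline{\partial_{t_j}})$ as $t_1\to 0$, the Hermitian Cauchy--Schwarz inequality forces $H(w,\bar w)\ge 0$ on a possibly smaller neighborhood of $p$.

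Continuity of $\rho_0+\rho_1$ on $U$, combined with the smooth positivity of $H$ on $B\cap U$ and the pointwise positivity at points of $Z_1^*\cap U$ just established, then yields plurisubharmonicity on all of $U$, and hence on $X$, in the current-theoretic sense of the Regularity Remark of the introduction (where $\bi\ddb(\rho_0+\rho_1)$ is permitted to carry a positive Dirac-like normal contribution along $Z$).

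The principal obstacle is the Cauchy--Schwarz bookkeeping: one must verify from the formulas in \S\ref{S:h1} that the pure-normal entry $H_{11}$ grows like $|t_1|^{-2}$ while every mixed entry $H_{1j}$ grows no faster than $|t_1|^{-1}|\log|t_1||$, and that no cancellation between the $\rho_0$- and $\rho_1$-contributions defeats this rate. The decisive input is \eqref{E:n31}, which bounds the $t_1$-dependence of every term containing $\x^3_1$; without this vanishing, the mixed terms could in principle compete with the normal blow-up and destroy positivity.
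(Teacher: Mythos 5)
Your overall strategy---reduce to the chart $U$, use the blow-up of the normal--normal entry of the Levi form, the semipositivity of the tangential block from Lemma \ref{L:psh1fnt}, and a Gram-matrix argument to control the mixed entries---is the deduction the paper intends (the corollary is stated with no written proof, as an immediate consequence of the two preceding lemmas). But two steps of your argument do not hold up. First, the claim that $\rho_1=-\log h$ is psh on $B\cap U$ ``by Theorem \ref{T:h} applied to the open stratum $B=Z^*_\emptyset$'' is wrong: that theorem identifies $h\big|_{Z_I^*}$ with a Hodge norm only when $Z_I^*\cap A\neq\emptyset$, and here $A\subset Z_1^*$, so $B\cap A=\emptyset$ and the theorem says nothing about $h$ on $B\cap X$. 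Indeed $h=-\tRe\,Q(\eta_0,\oleta_\infty)$ is an \emph{extension} of the Hodge norm on $Z\cap X$, not the Hodge norm on $B\cap X$ (that is $h_0$), and the paper explicitly warns, in \S\ref{S:prf-pcnvx} and in the remark following Theorem \ref{T:h}, that $-\log h$ is in general \emph{not} psh on all of $X$ in the cases of \S\S\ref{S:mindeg}--\ref{S:deg3}, which include the present one. Consequently the problem does not ``reduce to analyzing points of $Z_1^*\cap U$'': the whole difficulty is positivity on $B\cap U$ near $Z$, where the negative contributions of $\ddb\rho_1$ must be absorbed by the $\log|t_1|^2$-weighted terms of $\ddb\rho_0$; that absorption is precisely the content of Lemma \ref{L:psh1fnt} for tangential directions, and it is needed at points with $t_1\neq0$, not just on the divisor.

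Second, the Cauchy--Schwarz step is quantitatively insufficient as stated. Knowing only that $H_{11}$ grows like $|t_1|^{-2}$ (up to logarithms), that $H_{jj}\ge0$, and that $|H_{1j}|=O(|t_1|^{-1}|\log|t_1||)$ does not yield $|H_{1j}|^2\le H_{11}H_{jj}$: when $H_{jj}\to0$ --- which is exactly the situation in Steps 2--4 of the proof of Lemma \ref{L:psh1fnt}, where $\tdv q_1(\a)\to0$ --- the product $H_{11}H_{jj}$ can be of strictly smaller order than your upper bound on $|H_{1j}|^2$. What saves the argument is that the same derivatives $\tdv\a^i_j$ that make the tangential diagonal entries small also force the mixed entries to be correspondingly small, so the domination must be established regime by regime through the case analysis of \S\S\ref{S:s1}--\ref{S:s3} rather than by a uniform order-of-growth estimate. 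You correctly flag this bookkeeping as the principal obstacle, but the blanket bound you assert does not close it, so the proof as written has a genuine gap at exactly the point where the paper's two lemmas need to be combined.
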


\subsection{Proof of Lemma \ref{L:psh1fnt}}

\subsubsection{Preliminaries} \label{S:prf1prelim}

The proof of the lemma is by lengthy analysis of the asymptotic behavior of $\rho_0$, $\rho_1$ and their derivatives.   Keeping \eqref{E:n31} in mind, we will locally regard both $h$ and $h_0$ as polynomials in $t_1 ,\, \olt_1 ,\, \log|t_1|^2$ with coefficients in the space $\cC^\w(U)$ of real-analytic functions on $U$; that is, 
\[
  h, h_0 \ \in \ \cC^\w(U)[t_1,\olt_1,\log |t_1|^2] \,.
\]  
Likewise, we will regard $\dv \dbv (\rho_0 + \rho_1) = \ddb (\rho_0+\rho_1)(v,\olv)$ as an element of the field of fractions of $\cC^\w(U)[t_1,\olt_1,\log |t_1|^2]$. 

Set
\[
  q_1(\a) \ = \ -\bi Q(\a_{0,5} , N\ola_{0,5} ) 
  \tand q_0(\a) \ = \ Q(\a_{0,5} , \ola_{0,5}) \,.
\]
From \eqref{E:n31} and \S\ref{S:h1} we see that 
\begin{eqnarray*}
  h & = & q_1(\a) \ + \ O(|t_1| \log |t_1|^2) \\
  h_0 & = & -\frac{1}{2\pi} \log |t_1|^2 \, q_1(\a)
  \ + \ q_0(\a) \ + \ O(|t_1| \log |t_1|^2)
\end{eqnarray*}
As observed following \eqref{E:Qa1}, $q_1(\a)$ is identically $1$ along the fibre $A \cap U$.  Shrinking $U$ if necessary, we may assume that $q_1(\a) > \half$ on $U$.  Then
\begin{eqnarray}
	\nonumber
  \ddbv\,\rho_1 & = & - \ddbv\,\log q_1(\a) 
  \ + \ O(|t_1| \log |t_1|^2) \\
  	\nonumber
  \ddbv\,\rho_0 & = & 
  \frac{(\log|t_1|^2)^2}{4 \pi^2  \, h_0^3} 
  \cdot  q_1(\a)^3 \, \ddbv \, (1/q_1(\a)) \\
  	\label{E:ddbrho01}
  & & + \ \frac{\log|t_1|^2}{2\pi \, h_0^3} \cdot
  \left\{q_1(\a)\,\ddbv\,q_0(\a) \,+\, 
  	q_0(\a)\,\ddbv\,q_1(\a) \right\} \\
	\nonumber
  & & - \ \frac{\log|t_1|^2}{\pi \, h_0^3} \cdot
  \left\{
    \dv q_1(\a)\,\dbv\,q_0(\a) \,+\,
    \dv q_0(\a)\,\dbv\,q_1(\a)
  \right\} \\
  	\nonumber
  & & + \ \frac{O(|t_1|\,(\log|t_1|^2)^2)}{h_0^3} \,.
\end{eqnarray}
In particular
\begin{eqnarray*}
  \lim_{t_1\to0} \ddbv\, \rho_0 & = & 0 \,,\\
  \lim_{t_1\to0} \ddbv\, \rho_1 & = & 
  -\ddbv\,\log q_1(\a) \,,
\end{eqnarray*}
(All limits are taken with $t_2,\ldots,t_r$ constant.)  

\subsubsection{Step 1} \label{S:s1}

It may be seen directly from \eqref{E:Qa1}, \eqref{E:locA1} and the first Hodge--Riemann bilinear relation $0 = Q(\a_1,\a_1) = 2\a^6_1 \,+\, \sum_r (\a^r_1)^2$ that $-\log q_1(\a)$ is psh in a neighborhood of $A \cap U$; without loss of generality, $-\log q_1(\a)$ is psh on $U$.  If 
\[
  \lim_{t_1\to0} -\ddbv\,\log q_1(\a) \ > \ 0 \,,
\]
then it follows from \S\ref{S:prf1prelim} that, after shrinking $U$ if necessary, we have $\ddbv\,(\rho_0+\rho_1) > 0$ on $U$.

\subsubsection{Step 2} \label{S:s2}

Now suppose that
\begin{equation}\label{E:limdQaNa1}
  \lim_{t_1\to0} - \,
  \ddbv \,\log q_1(\a) \ = \ 0 \,.
\end{equation}
Equivalently,
\begin{equation} \nonumber 
  \lim_{t_1 \to 0}
  \tdv \a^r_1 \,,\, \tdv \a^6_1 \,,\, \tdv \a^3_2
  \ = \ 0 \,.
\end{equation}
Consider the coefficient of $\log|t_1|^2$ in the expression for $\ddbv\,\rho_0$ in \eqref{E:ddbrho01}.  
We compute
\begin{eqnarray*}
  \lim_{t_1\to0} 
  \ddbv\,q_0(\a)
  & = &
  Q(\a_1,\ola_1) \, 
  Q \big( \tdv \a_2 , \tdbv\, \ola_2 \big) \\
  & & - \ 
  \left| Q \big(\a_2 , \tdbv\,\ola_1 \big) \right|^2
  \ - \ 
  \left| Q \big(\a_1 , \tdbv\,\ola_2 \big) \right|^2 
  \\
  & = & 
  -( 1 + |\a^6_1|^2 - \tsum_r|\a^r_1|^2)
  \left[ \tsum_r \left|\tdv  \a^r_2 \right|^2 \,-\,
  \left|\tdv \a^6_2 \right|^2  \right] \\
  & & - \
  \left| \tdv \a^4_1 \,+\,\ola^3_2\,\tdv \a^5_1 \right|^2
  \ - \ 
  \left| \tsum_r \ola^r_1\,\tdv \a^r_2 \,-\,
  	\ola^6_1 \tdv \a^6_2 \right|^2 \,.
\end{eqnarray*}
The infinitesimal period relation $0 = Q(\x_1,\td\x_2) = Q(\td \x_1 , \x_2)$ yields $\tdv \a^6_2 = - \sum_r \a^r_1\tdv \a^r_2$ and $\tdv \a^5_1 = -\a^3_2\tdv \a^4_1$, along $t_1=0$.  It follows that
\begin{equation}\label{E:limdQaa1}
  \lim_{t_1\to0} 
  \ddbv \, q_0(\a) \ \le \ 0
\end{equation}
with equality if and only if $\td\a^4_1(v) \,,\ \td\a^r_2(v) =0$ at $t_1=0$.  If the inequality \eqref{E:limdQaa1} is strict,  then $\ddbv (\rho_0+\rho_1)$ is asymptotically dominated by the $\log|t_1|^2$ terms in \eqref{E:ddbrho01}.  In particular, $\ddb (\rho_0+\rho_1)(v,\olv) > 0$ for $0 < |t_1|$ sufficiently small.

\subsubsection{Step 3} \label{S:s3}

We continue to assume that \eqref{E:limdQaNa1}, and now suppose that equality holds in \eqref{E:limdQaa1}.  These conditions are equivalent to 
\begin{equation}\label{E:limda1}
  \lim_{t_1\to0} \tdv \a_j^i \,,\qquad \forall  \ i,j  \,.
\end{equation}
This means that all differentials $\td_v\,\x^i_j$ of the period matrix representation vanish along $\{t_1=0\}$.  If it is the case that $\tdv \n^3_1 \not=0$ at $t_1=0$, then $-\ddbv(\rho_0+\rho_1)$ is asymptotically dominated by the terms
\[
  |t_1|^2\,\log |t_1|^2
  \left( \frac{\log |t_1|^2}{4\pi^2\,h_0^2}
  	\,-\, \frac{1}{2\pi\,h}  \right) \, 
  \left| \tdv \n^3_1 \right|^2 \,.
\]
We have $-\ddbv(\rho_0+\rho_1) > 0$ for $0 < |t_1|$ sufficiently small.

\subsubsection{Step 4}

It remains only to consider the case that \eqref{E:limda1} holds, and $\tdv \n^3_1$ vanishes along $t_1=0$.  In this case, the formulae of \S\S\ref{S:matcoef1}--\ref{S:h1} imply that  
\begin{eqnarray*}
  \ddbv\,(\rho_0+\rho_1) & = & 
  - \ddbv\,\log q_1(\a) \\
  & & + \ \frac{\log|t_1|^2}{2\pi \, h_0^3} \cdot
  \left\{q_1(\a)\,\ddbv\,q_0(\a) \,+\, 
  	q_0(\a)\,\ddbv\,q_1(\a) \right\} \\
	\nonumber
  & & - \ \frac{\log|t_1|^2}{\pi \, h_0^3} \cdot
  \left\{
    \dv q_1(\a)\,\dbv\,q_0(\a) \,+\,
    \dv q_0(\a)\,\dbv\,q_1(\a)
  \right\} \\
  & & + \ 
  |t_1|^2\,\log |t_1|^2
  \left( \frac{\log |t_1|^2}{4\pi^2\,h_0^2}
  	\,-\, \frac{1}{2\pi\,h}  \right) \, 
  \left| \tdv \n^3_1 \right|^2 
  \ + \ \cdots 
\end{eqnarray*}
is asymptotically dominated by one of the terms considered in \S\S\ref{S:s1}--\ref{S:s3} (the terms displayed here).  We conclude that $\ddbv(\rho_0+\rho_1) \ge 0$ on $U$ (possibly after shrinking $U$).  This completes the proof of Lemma \ref{L:psh1fnt}. \hfill\qed

\subsection{Pseudoconvexity}

In order to complete the proof of Theorem \ref{T:pseudocnvx} for the degeneration \eqref{E:hd1} as outlined in \S\ref{S:prf-pcnvx} it suffices to observe that: (1) the analysis in the proof of Lemma \ref{L:psh1fnt} implies \S\ref{S:prf-pcnvx}\ref{i:psh}; and (2) we have $A \subset \{ \rho_0  + \rho_1\}$, and an analysis similar to that in the proof of Lemma \ref{L:psh1fnt} implies \S\ref{S:prf-pcnvx}\ref{i:A}.
\hfill\qed

\bigskip

The arguments for the remaining four degenerations are similar to those above, and we will be increasingly brief as we work through in them in \S\S\ref{S:deg2}--\ref{S:lastdeg}.  We continue to make liberal use of \cite[\S\S\ref{extn-S:prelim}-\ref{extn-S:prfTh}]{Robles-extnHnorm}.

\section{Proof of Theorem \ref{T:pseudocnvx} for the second degeneration} \label{S:deg2}

The next degeneration has limiting mixed Hodge structure $(W,F,N)$ satisfying $W_0(V) = W_1(V)$ and $W_2(V) = W_3(V)$, and with Hodge numbers $\bfh_2 = (1,\sfh,1)$ and $\bfh_0 = (1)$.  The associated Hodge diamond $\Diamond(V)$ is 
\begin{equation}\label{E:hd2}
\begin{tikzpicture}[baseline={([yshift=-.5ex]current bounding box.center)}]
  \draw [<->] (0,2.75) -- (0,0) -- (2.75,0);		
  \draw [gray] (1,0) -- (1,2);
  \draw [gray] (2,0) -- (2,2);
  \draw [gray] (0,1) -- (2,1);
  \draw [gray] (0,2) -- (2,2);
  \draw [fill] (0,2) circle [radius=0.07];
  \node [above right] at (0,2) {{$v_5$}};
  \draw [fill] (0,0) circle [radius=0.07];
  \node [above right] at (0,0) {{$v_4$}};
  \draw [fill] (1,1) circle [radius=0.07];
  \node [above right] at (1,1) {{$v_3$}};
  \node [below right] at (1,1) {{$v_r$}};
  \draw [fill] (2,2) circle [radius=0.07];
  \node [above right] at (2,2) {{$v_2$}};
  \draw [fill] (2,0) circle [radius=0.07];
  \node [above right] at (2,0) {{$v_1$}};
\end{tikzpicture}
\end{equation}
See \eqref{E:hd2-2} for the Hodge diamond on $\Diamond(H)$.

We may choose the basis $\{v_1,\ldots,v_\sfd\}$ of $V_\bC$ so that: the polarization satisfies $Q(e_a,e_b) = \d^6_{a+b}$ for all $a,b \le 5$, and $Q(e_r,e_s) = \d_{rs}$ for all $r,s\ge6$, and all other pairings are zero; the Hodge filtration is given by  $F^2 = \tspan\{v_1,v_2\}$ and $F^1 = \tspan \{ v_1,v_2,v_3 , v_r\}_{r\ge 6}$; the underlying real structure is $\overline{v_1} = -v_5$, $\overline{v_3}=-v_3$ and all other basis vectors are real; the nilpotent operator is given by $N = \bi(v_4 \ot v^3-v_3 \ot v^2)$; and the Hodge filtration is $F^2(V) = \tspan\{v_1,v_2\}$.  We have $e_0 = v_1 \wedge v_2$ and $e_\infty = v_1\wedge v_4$, in the notation of \cite[\S\S\ref{extn-S:pmr}-\ref{extn-S:einfty}]{Robles-extnHnorm}.

\subsection{Period matrix representation}

Over $B \cap X$ we have $F^2_\Phi = \tspan_\bC\{\x_1,\x_2\}$ where
\[
  \x_a \ = \ v_a \ + \ \sum_{j\ge 3} \x_a^j\,v_j \,,\quad a=1,2 \,.
\]
The $\x_a^j : B \cap X \to \bC$ are holomorphic and defined up to the action of the monodromy $\Gamma_X$.  The
\begin{eqnarray*}
  \x_3 & = & v_3 \,-\, \x_2^3\,v_4 \,-\, \x_1^3\,v_5 \\
  \x_r & = & v_r \,-\, \x_2^r\,v_4 \,-\, \x_1^r\,v_5 
  \,,\quad r \ge 6 \,.
\end{eqnarray*}
extend $\{\x_1,\x_2\}$ to a framing of $F^1(\Phi)$.  

By \cite[\eqref{extn-SE:A}]{Robles-extnHnorm}, the fibre $A$ is cut out by 
\begin{equation}\label{E:A2}
  A \ = \ 
  \{ \xi^3_1 \,,\, \xi^5_1 \,,\, \xi^r_1 \ = \ 0\} 
  \,\cap\, Z_1 \,.
\end{equation}
We have $\eta_0 = \x_1 \wedge \x_2$ and $\eta_\infty = \x_1\wedge \x_4$.

\subsection{Matrix coefficients in local coordinates}\label{S:matcoef2}

As discussed in \cite[\S\ref{extn-S:loc1}]{Robles-extnHnorm}, we have $\x_j = \exp(\ell(t_1)N) \z(t)\cdot(v_j)$, with $\z : U \to \exp(\fs_F^\perp)$ holomorphic.  We have $\fs_F^\perp = (\fs_F^\perp \cap \fz_N) \op \bC(v_3 \ot v^1 -  v_5 \ot v^3)$, with $\fz_N$ the centralizer of $N$ in $\fg_\bC$.  We may factor $\z = \exp \x^3_1(v_3 \ot v^1 -  v_5 \ot v^3) \cdot \a$ with $\a : U \to \exp(\fs_F^\perp \cap \fz_N)$ holomorphic.  Then
\begin{eqnarray*}
  \x_1 & = & \a_1 \,+\, 
  	\x^3_1\,\b_{1,2} \,+\,
	\ell(t_1)\,\x^3_1\,N \b_{1,2} \\
  \x_2 & = & \a_2 \,+\, \ell(t_1)\,N\,\a_2 \,+\,
  	\half\,\ell(t_1)^2\,N^2\,\a_2 \,,
  \qquad \x_4 \ = \ N^2\,\a_2 \ = \ v_4 \,,
\end{eqnarray*}
with $\b_{1,2} = v_3 - \half \x^3_1 v_5 : U \to W_2(V_\bC)$ holomorphic.  The condition \cite[\eqref{extn-E:locW}]{Robles-extnHnorm} is equivalent to the vanishing of $\x^3_1$ along $U \cap Z = \{t_1=0\}$.  In particular,
\begin{equation}\label{E:n31-2}
  \xi^3_1 \ = \ t_1\,\n^3_1
\end{equation}
for some holomorphic $\n^3_1 : U \to \bC$.  The fibre \eqref{E:A2} is locally characterized by 
\begin{equation} \label{E:A2loc}
  A \,\cap \, U \ = \ \{ t_1 ,\, \a^5_1 ,\, \a^r_1 = 0 \} \,.
\end{equation}

The following observations will simplify the computations that follow.  Set $\a_j = \a(v_j)$, and define $\a_j^i \in \cO(U)$ by $\a_j = \a_j^i\,v_i$.  The condition that $\a$ centralizes $N$ is equivalent to $\a^3_1 = 0$.  We also have $\x_2^3 = \a^3_2 - \bi\ell(t)$.  After a change of coordinates $t_1 \mapsto \exp(2\pi\a^3_2)\,t_1$, we also normalize $\a^3_2 = 0$.  To summarize $\a^3_1 ,\, \a^3_2 = 0$.  This implies that 
\begin{equation} \label{E:a3=0}
  0 \ = \ Q(\a_i , v_3) \ = \ Q(\a_i , N\a_j) \,,\qquad
  i,j=1,2\,.
\end{equation}

\subsection{The sections $\eta_0$ and $\eta_\infty$ in local coordinates}

Set
\begin{eqnarray*}
  \a_{0,6} & = & \a_1 \wedge \a_2 \,:\, 
	  U \ \to \ W_6(H_\bC) \\
  \b_{0,6} & = & \b_{1,2} \wedge \a_2 \,:\, 
  	U \ \to \ W_6(H_\bC) \\
  \b_{\infty,2} & = & \b_{1,2} \wedge (N^2\a_2) \,:\,
  	U \ \to \ W_2(H_\bC) \,.
\end{eqnarray*}
Then
\begin{eqnarray*}
  \eta_0 & = &  
  \a_{0,6} + \x^3_1\,\b_{0,6} 
  \,+\, \ell(t_1)\,N\,(\a_{0,6} + \x^3_1\,\b_{0,6}) 
  \,+\,
  	\half\,\ell(t_1)^2\,N^2\,(\a_{0,6} + \x^3_1\,\b_{0,6}) \\
  \eta_\infty & = & 
  N^2\,\a_{0,6} \ + \ \x^3_1\,\b_{\infty,2} \,.
\end{eqnarray*}

\subsection{The Hodge norms in local coordinates} \label{S:h2}

The extension of the Hodge norm on $Z \cap X$ to $X$ is $h = -\tRe\,Q(\eta_0 , \oleta_\infty)$.  There are two expressions for $h$ that will be useful.  Keeping \eqref{SE:Wn=2} in mind,  we have
\begin{eqnarray*}
  -Q(\eta_0 \,,\, \oleta_\infty) & = & 
  -Q( \x_1 \,,\, \overline{\x_1})\,
  	Q( \x_2 \,,\, \overline{\x_4})
  \ + \ 
  Q( \x_1 \,,\, \overline{\x_4})\,
  	Q( \x_2 \,,\, \overline{\x_1}) \\
  & = & -Q( \x_1 \,,\, \overline{\x_1}) 
  \ = \ 1 \,+\, |\x^3_1|^2 \,+\, |\x^5_1|^2 \,-\, 
  \sum_{r\ge6} |\x^r_1|^2 \,;
\end{eqnarray*}
with $\x^r_1 = \a^r_1$ and $\x^5_1 = \a^5_1 - \half(\x^3_1)^2$.
Alternatively, \eqref{E:a3=0} implies $\olx^3_1$ divides both $Q(\a_{0,6} , \olb_{\infty,2})$ and $Q(\a_{0,6} , N^2\olb_{0,6}) = Q(N^2\a_{0,6} , \olb_{0,6})$ in $\cC^\w(U)$, so that \eqref{E:n31-2} yields
\begin{eqnarray*}
  h & = & 
  - Q (\a_{0,6} + \x^3_1\,\b_{0,6} \,,\, 
  N^2 \ola_{0,6} + \olx^3_1\,\olb_{\infty,2}) \\
  & = & -Q(\a_{0,6} , N^2\ola_{0,6}) \,+\, 
  |\x^3_1|^2 (1 + \tfrac{1}{4}|\x^3_1|^2) 
  \,-\, \tRe\,\a^5_1 (\olx^3_1)^2 \,.
\end{eqnarray*}

The Hodge norm on $B \cap X$ is $h_0 = Q(\eta_0,\oleta_0)$.  Again, keeping \eqref{SE:Wn=2} in mind, in local coordinates we have
\begin{eqnarray*}
  h_0 & = & -\frac{(\log|t_1|^2)^2}{8\pi^2} \,
  Q \left( \a_{0,6} + \x^3_1\,\b_{0,6} \,,\, N^2(\ola_{0,6} 
  + \olx^3_1\,\olb_{0,6}) \right) \\ 
  & & + \ \bi \frac{\log |t_1|^2}{2\pi}
  Q \left( \a_{0,6} + \x^3_1\,\b_{0,6} \,,\, N(\ola_{0,6} 
  + \olx^3_1\,\olb_{0,6}) \right) \\
  & & + \ Q 
  \left( \a_{0,6} + \x^3_1\,\b_{0,6} \,,\, \ola_{0,6} 
  + \olx^3_1\,\olb_{0,6} \right) \\
  & = & 
  -\frac{(\log|t_1|^2)^2}{8\pi^2} \,
  \left\{
    Q ( \a_{0,6} \,,\, N^2\ola_{0,6}) \,+\, 
    |\x^3_1|^2\,(1 - \tfrac{1}{4} |\x^3_1|^2) \,+\,
    \tRe\,\a^5_1 (\olx^3_1)^2 
  \right\} \\
  & & - \ \frac{\log |t_1|^2}{2\pi}
  \left\{
  \olx^3_1\,Q(\a_1,\ola_2) + \x^3_1\,Q(\ola_1,\a_2)
  \, + \, O(|t_1|^2)
  \right\} \\
  & & + \ Q 
  \left( \a_{0,6} + \x^3_1\,\b_{0,6} \,,\, \ola_{0,6} 
  + \olx^3_1\,\olb_{0,6} \right)
  \,,
\end{eqnarray*}
with the second equality making use of \eqref{E:a3=0}.

\subsection{Plurisubharmonicity} \label{S:psh2}

Set $\rho_0 = 1/h_0$ and $\rho_1 = -\log h$. We know that $\rho_0$ is psh on $X$, and that $\rho_1$ is psh on $Z \cap X$.  The goal of this section is to show that $\rho_0+\rho_1$ is psh on $U$.  We follow the approach of \S\ref{S:psh1}.

\begin{lemma} 
Fix a holomorphic vector field $v$ on $U$ with $\tdv t_1 = 1$.  We have
\[
  \lim_{t_1\to0} \ddbv \, (\rho_0 + \rho_1) 
  \ = \ +\infty \,.
\]
\end{lemma}

\begin{proof}
Set 
\[
  q_2(\a) \ = \ -Q(\a_{0,6} , N^2\ola_{0,6}) 
  \ = \ -Q( \a_1 \,,\, \overline{\a_1}) 
  \ = \ 1 + |\a^5_1|^2 - \sum_{r\ge6} |\a^r_1|^2 \,.
\]
From \eqref{E:A2loc} we see that $q_2(\a)$ is identically one along $A \cap U$; shrinking $U$ if necessary, we may assume that $q_2(\a) > \half$ on $U$.  Then as $t_1 \to 0$, the expression $\ddbv\, (\rho_0 + \rho_1)$ is dominated by the term
$\displaystyle
  \frac{6\,q_2(\a)^2}{(8\pi^2)^2\,h_0^3} \,
  (\log |t_1|^2)^2\, \frac{\td t_1  \wedge\td \olt_1}{|t_1|^2}
$
in $\ddb \rho_0$.
\end{proof}

\begin{lemma} \label{L:psh2fnt}
Fix a holomorphic vector field $v$ on $U$ with $\tdv t_1 = 0$.  Shrinking $U$ if necessary, we have $\ddbv\, (\rho_0 + \rho_1) \ge 0$.
\end{lemma}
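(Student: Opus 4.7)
The plan is to mirror the four-step argument in the proof of Lemma \ref{L:psh1fnt}. Regarding $h$ and $h_0$ as elements of $\cC^\w(U)[t_1,\olt_1,\log|t_1|^2]$ via \eqref{E:n31-2} and the formulas of \S\ref{S:h2}, one has
\[
  h \ = \ q_2(\a) \ + \ O(|t_1|^2)
  \tand
  h_0 \ = \ -\tfrac{1}{8\pi^2}\,(\log|t_1|^2)^2\,q_2(\a)
  \ + \ O(1) \,,
\]
with $q_2(\a) = 1 + |\a^5_1|^2 - \sum_{r\ge 6}|\a^r_1|^2$.  By \eqref{E:A2loc}, $q_2(\a)\equiv 1$ along $A\cap U$, so after shrinking $U$ we may assume $q_2(\a) > \tfrac{1}{2}$ on $U$.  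Since $\tdv t_1 = 0$, every derivative of $\log|t_1|^2$ along $v$ or $\olv$ vanishes.

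The first step is to take the limit $t_1\to 0$: one has $\ddbv\rho_1\to -\ddbv\log q_2(\a)$, while $\ddbv\rho_0\to 0$ since all its terms carry negative powers of $(\log|t_1|^2)^2$ coming from $h_0$.  Using the first Hodge--Riemann bilinear relation $Q(\a_1,\a_1)=0$ together with the description \eqref{E:A2loc}, one verifies $-\ddbv\log q_2(\a)\ge 0$ in a neighborhood of $A\cap U$; when the limit is strictly positive, the desired inequality $\ddbv(\rho_0+\rho_1) > 0$ follows after shrinking $U$.

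When the limit vanishes (equivalently, $\tdv \a^5_1,\,\tdv \a^r_1 = 0$ at $t_1=0$), the next-order contribution to $\ddbv(\rho_0+\rho_1)$ comes from the $\log|t_1|^2/h_0^3$ terms in $\ddbv\rho_0$.  Using the second expression for $h_0$ in \S\ref{S:h2} and the normalizations $\a^3_1=\a^3_2=0$ (which give \eqref{E:a3=0}), this reduces to a combination of $\ddbv Q(\a_{0,6},\ola_{0,6})$, $\ddbv q_2(\a)$, and the cross-terms $\dv q_2(\a)\,\dbv Q(\a_{0,6},\ola_{0,6})$.  Applying the infinitesimal period relation $0 = Q(\x_1,\td\x_2) = Q(\td\x_1,\x_2)$, as in Step 2 of \S\ref{S:s2}, the limit of $\ddbv Q(\a_{0,6},\ola_{0,6})$ should rewrite as a nonpositive sum of squares of the $\tdv \a^i_j$; strict inequality there gives $\ddbv(\rho_0+\rho_1) > 0$ for $0<|t_1|$ sufficiently small.

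In the remaining case, in which every $\tdv\a^i_j$ vanishes at $t_1=0$, the lowest-order surviving term takes the form $|t_1|^2\log|t_1|^2\bigl(c_1 \log|t_1|^2/h_0^2 - c_2/h\bigr)|\tdv\n^3_1|^2$ for positive constants $c_1,c_2$, whose sign handles the case $\tdv\n^3_1\neq 0$ at $t_1=0$.  When $\tdv\n^3_1$ also vanishes, every remaining contribution to $\ddbv(\rho_0+\rho_1)$ of nonpositive sign vanishes to higher order than the nonnegative ones, so $\ddbv(\rho_0+\rho_1)\ge 0$ on $U$ after a further shrinking.  The main obstacle will be the third step: unlike the minimal degeneration, here $N^2\neq 0$ on $V$ and additional cross-terms appear in the expansion of $Q(\a_{0,6},\ola_{0,6})$; one must verify carefully that the infinitesimal period relation, combined with the isotropy \eqref{SE:Wn=2} and \eqref{E:a3=0}, reduces these to a manifestly nonpositive expression.
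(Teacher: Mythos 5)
Your Steps 1 and 2 track the paper's argument: the leading term is $-\ddbv\log q_2(\a)$, and when that limit vanishes the next contribution is governed by $\ddbv q_0(\a)$ with $q_0(\a)=Q(\a_{0,6},\ola_{0,6})$, which is shown to be $\le 0$ using the first Hodge--Riemann relation and the infinitesimal period relation. (One bookkeeping slip: since $h_0\sim\tfrac{1}{8\pi^2}(\log|t_1|^2)^2\,q_2(\a)$ in this degeneration, the dominant surviving term in Step 2 is $-\ddbv q_0(\a)/h_0^2$, of order $(\log|t_1|^2)^{-4}$; the ``$\log|t_1|^2/h_0^3$'' structure you invoke belongs to the minimal degeneration, where $h_0$ grows only like $\log|t_1|^2$.)

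The genuine gap is in your final step. You import from the minimal degeneration the idea that a possible nonvanishing of $\tdv\n^3_1$ at $t_1=0$ is handled by the sign of a term of the form $|t_1|^2\log|t_1|^2\bigl(c_1\log|t_1|^2/h_0^2-c_2/h\bigr)|\tdv\n^3_1|^2$. In the present degeneration the relevant term does not have that form and its sign does not save you: here $h=1+|\x^3_1|^2+|\x^5_1|^2-\sum_{r\ge6}|\x^r_1|^2$ carries no $\log$ factors, so $\ddbv\rho_1=-\ddbv h/h+\cdots$ contributes $-|t_1|^2|\tdv\n^3_1|^2/h\to-|t_1|^2|\tdv\n^3_1|^2/q_2(\a)<0$, while the compensating contribution from $\rho_0$ is $+\tfrac{(\log|t_1|^2)^2}{8\pi^2 h_0^2}|t_1|^2|\tdv\n^3_1|^2=O\bigl(|t_1|^2(\log|t_1|^2)^{-2}\bigr)$, which is strictly smaller. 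So if $\tdv\n^3_1$ could be nonzero at $t_1=0$ while all $\tdv\a^i_j$ vanish, the Lemma would fail. The missing ingredient is the paper's Step 3: differentiating the infinitesimal period relation gives $0=Q(\td\x_1,\td\x_2)=\tfrac{1}{2\pi}\,\td t_1\wedge\td\n^3_1+\sum_r\td\a^r_1\wedge\td\a^r_2$, and evaluating on $\partial_{t_1}\wedge v$ yields $\tdv\n^3_1=2\pi\sum_r\bigl(\tdv\a^r_1\cdot\td_{\partial_{t_1}}\a^r_2-\td_{\partial_{t_1}}\a^r_1\cdot\tdv\a^r_2\bigr)$. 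Hence $\tdv\n^3_1$ vanishes along $\{t_1=0\}$ whenever $\tdv\a_1,\tdv\a_2$ do, with order of vanishing bounded below by theirs; this, not a sign argument, is what controls the negative $|\tdv\n^3_1|^2$ term and closes the proof.
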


\begin{corollary} \label{C:phs2}
The function $\rho_0 + \rho_1$ is psh on $X$.
\end{corollary}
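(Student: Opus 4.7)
The plan is to derive global plurisubharmonicity on $X$ from the two preceding local statements (the blow-up lemma and Lemma \ref{L:psh2fnt}) by a Levi-matrix argument, followed by a compactness/covering step.

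First, the question is local: it suffices to verify that $\ddb(\rho_0+\rho_1)\ge 0$ as a $(1,1)$-current in a neighborhood of every point of $X$. At points of $B\cap X$ away from a prescribed neighborhood of $A$, this follows immediately from plurisubharmonicity of $\rho_0=1/h_0$ on all of $X$ together with plurisubharmonicity of $\rho_1=-\log h$ on $B\cap X$ \emph{away from where $h$ fails to be a Hodge norm}; by the construction of $h$ in Theorem \ref{T:h}, $\rho_1$ is smooth everywhere on $X$, and the only place the Levi form of $\rho_1$ need not be PSD is in a neighborhood of the fibre $A$. So the real content is to verify plurisubharmonicity in a local chart $(U,t)$ of the type used in \S\ref{S:matcoef2}--\S\ref{S:h2}, centered at an arbitrary point $b\in A$, with $U\cap Z_1=\{t_1=0\}$.

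In such a chart I would record the Hermitian Levi form $\mathsf{L}_{i\bar\jmath}=\ddb(\rho_0+\rho_1)(\partial/\partial t_i,\overline{\partial/\partial t_j})$ on $U$ and show it is PSD. Lemma \ref{L:psh2fnt} (together with its proof, which also handles arbitrary linear combinations of vector fields tangent to $Z_1$ by polarization of the Hermitian form) gives that the submatrix $(\mathsf{L}_{i\bar\jmath})_{i,j\ge 2}$ is PSD on $U$ (after shrinking). The preceding blow-up lemma gives $\mathsf{L}_{1\bar 1}\to+\infty$ as $t_1\to 0$; in fact, from the proof, $\mathsf{L}_{1\bar 1}$ grows like $(\log|t_1|^2)^2/|t_1|^2$. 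For the mixed entries, the explicit formulae of \S\ref{S:h2} show that each $\mathsf{L}_{1\bar\jmath}$ ($j\ge 2$) is at worst of order $(\log|t_1|^2)/|t_1|$ (the $1/t_1$ coming from differentiating $\log|t_1|^2$, with the tangential derivative of the holomorphic coefficients bounded on $\overline U$). A Cauchy--Schwarz / Schur-complement estimate then gives
\[
  \mathsf{L}_{1\bar 1}\cdot\mathsf{L}_{j\bar\jmath}\;-\;|\mathsf{L}_{1\bar\jmath}|^2
  \;\ge\; 0
\]
for $|t_1|$ sufficiently small, which upgrades the block-PSD information into full PSD of $\mathsf{L}$ on a shrunken neighborhood $U'\subset U$. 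On the divisor $\{t_1=0\}\cap U'$, the same calculation shows that the $(1,1)$-diagonal entry is $+\infty$ while the tangential block remains PSD; as indicated in the Regularity remark, this is to be read as the positivity of a current $\bi\ddb(\rho_0+\rho_1)$, with $\rho_1\big|_{Z_1\cap U'}$ psh by Theorem \ref{T:h} and $\rho_0\big|_{Z_1\cap U'}\equiv 0$.

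Finally, since $A$ is a connected compact fibre of $\hPhiS$, I would cover $A$ by finitely many such shrunken neighborhoods $U'_1,\ldots,U'_k$ on which $\ddb(\rho_0+\rho_1)\ge 0$ as a current. Applying Theorem \ref{T:ggr} again to replace $X$ by a smaller neighborhood of $A$ contained in $U'_1\cup\cdots\cup U'_k$ (this only strengthens condition \eref{i:triv}), the sum $\rho_0+\rho_1$ is psh on this new $X$. The main obstacle is the uniform Cauchy--Schwarz control of the mixed Levi entries $\mathsf{L}_{1\bar\jmath}$; this is essentially bookkeeping from the formulas of \S\ref{S:h2}, but it must be carried out carefully enough to beat the $1/|t_1|$ singularity on both sides of the estimate.
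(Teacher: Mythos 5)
Your overall route is the same as the paper's: the corollary is meant to follow from the two preceding lemmas by exactly the Levi--matrix argument you describe (normal entry blows up, tangential block is PSD, cross terms controlled, then cover $A$ and shrink $X$). The paper does not write this combination step out, so spelling out the Schur-complement mechanism is reasonable. Two points, however, need attention.

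First, the inequality $\mathsf{L}_{1\bar 1}\cdot\mathsf{L}_{j\bar\jmath}-|\mathsf{L}_{1\bar\jmath}|^2\ge 0$ is \emph{not} automatic from ``$\mathsf{L}_{1\bar 1}\to+\infty$ and $\mathsf{L}_{1\bar\jmath}$ of lower order,'' because Lemma \ref{L:psh2fnt} only gives $\mathsf{L}_{j\bar\jmath}\ge 0$, and equality genuinely occurs: in directions tangent to a fibre of $\PhiS$ the tangential Levi form degenerates. At such a point a nonzero cross term would make the $2\times 2$ minor negative. What saves the argument is that the quantities controlling the cross terms ($\dv q_2(\a)$, $\dv q_0(\a)$, $\tdv\n^3_1$, \dots) are exactly the quantities whose vanishing characterizes the degenerate tangential directions in Steps 1--3 of the proof of Lemma \ref{L:psh2fnt}; so the cross terms vanish to the required order precisely where $\mathsf{L}_{j\bar\jmath}$ does. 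This correlation is the real content of the step you call ``bookkeeping,'' and it is where the case analysis of the lemma's proof must be re-used rather than just its statement. Relatedly, your stated growth rates are off: from \S\ref{S:h2} one has $h_0\sim(\log|t_1|^2)^2 q_2(\a)/8\pi^2$, so the dominant term of $\mathsf{L}_{1\bar 1}$ is of order $|t_1|^{-2}(\log|t_1|^2)^{-4}$, not $(\log|t_1|^2)^2/|t_1|^2$; the Cauchy--Schwarz bookkeeping has to be done against the correct powers of $\log|t_1|^2$.

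Second, your justification that $\rho_1$ is smooth on $X$ ``by the construction of $h$ in Theorem \ref{T:h}'' is not right: that theorem only gives continuity on $X$ and smoothness on strata, and the paper explicitly warns that both smoothness and plurisubharmonicity of $-\log h$ can fail off the strata. In the present degeneration $h=-Q(\x_1,\overline{\x_1})$ happens to be real-analytic on $U$ by the explicit formula of \S\ref{S:h2}, but that is a computation, not a consequence of Theorem \ref{T:h}; likewise there is no a priori plurisubharmonicity of $\rho_1$ on $B\cap X$ away from $A$, which is why the final $X$ must be taken inside the union of the charts where the lemmas have been verified, as your covering step does.
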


\subsection{Proof of Lemma \ref{L:psh2fnt}}

The proof of Lemma \ref{L:psh2fnt} is along the same lines as that of Lemma \ref{L:psh1fnt}, and we will be commensurately brief.

\subsubsection{Step 1}

We see from \eqref{E:n31-2} and \S\ref{S:h2} that the leading term, as $t_1\to 0$ in $\ddbv(\rho_0+\rho_1)$ is $-\log q_2(\a)$.  It follows from general principles, or directly from the first Hodge--Riemann bilinear relation $0 = Q(\a_1,\a_1) = 2\,\a^5_1 \,+\, \sum_{r\ge6} (\a_1^r)^2$, that $-\log q_2(\a)$ is psh.  If 
$\displaystyle \lim_{t_1\to0} -\ddbv\log q_2(\a)>0$, then
$\displaystyle \lim_{t_1\to0} \ddbv (\rho_0+\rho_1)>0$.

\subsubsection{Step 2}

Suppose 
$\displaystyle \lim_{t_1\to0} -\ddbv\log q_2(\a)=0$.  This is equivalent to 
$\displaystyle \lim_{t_1\to0} \tdv \a_1^r ,\, \tdv\a_1^5=0$, and we have 
$\displaystyle \lim_{t_1\to0} \ddbv (\rho_0+\rho_1)=0$. 
Set
\[
  q_0(\a) \ = \ Q(\a_{0,6},\ola_{0,6}) 
  \ =  \ Q(\a_1,\ola_1) Q(\a_2,\ola_2) \,-\,
  \left| Q(\a_1,\ola_2) \right|^2 \,.
\]
Then
\begin{eqnarray*}
  \dv \, q_0(\a) & = & 
  Q(\a_1,\ola_1)\, 
  \left( \tdv\a^4_2 - \ola^5_2\,\tdv\a^5_2 +
  	\tsum \ola^r_2\,\tdv \a^r_2 \right) \\
  & & - \ 
  Q(\ola_1,\a_2)\,\tdv  \a^4_1
  \ + \ Q(\a_1,\ola_2)\,
  \left(
    \ola^5_1\,\tdv \a^5_2 \,-\, \tsum \ola^r_1\,\tdv \a^r_2
  \right) \\
  \ddbv\,q_0(\a) & = & 
  - Q(\a_1,\ola_1)\, 
  \left(\left|\tdv\a^5_2\right|^2 \,-\,
  	\tsum \left|\tdv \a^r_2\right|^2 \right)
	\ - \ \left| \tdv\a^4_1 \right|^2  \\
  & & - \ 
  \left|
    \ola^5_1 \tdv\a^5_2 \,-\, \tsum \ola^r_1 \tdv\a^r_2
  \right|^2 \,.
\end{eqnarray*}
The first Hodge--Riemann bilinear relation yields $0 = Q(\a_2,\a_2) = 2\a^4_2 \,+\, \sum (\a^r_2)^2$.   And the infinitesimal period relation $0 = Q(\x_1,\td \x_2) = \td\x^5_2 \,+\, \x^3_1\,\td\x^3_2 \,+\, \x^r_1\,\td\x^r_2$ implies $0 = \tdv \a^5_2 + \a^r_1\,\tdv \a^r_2$,
so that $\tdv\a^5_2 = 0$ along $A \cap  U$.  Then  implies that $q_0(\a) < 0$ in a neighborhood of $A \cap U$.  If $\lim_{t_1\to0} |\tdv \a^4_1|^2 + |\tdv \a^r_2|^2 > 0$, then $\ddbv(\rho_0+\rho_1)$ is asymptotically dominated by the term $-(1/h_0^2) \ddbv\,q_0(\a)$ as $t_1 \to 0$, and we have $\ddbv (\rho_0 + \rho_1) > 0$ in a punctured neighborhood $\{ 0 < |t_1| < \e\}$ of $Z_1 \cap U = \{t_1=0\}$. 

\subsubsection{Step 3}

Now assume that 
$\displaystyle \lim_{t_1\to0} \tdv\a_1,\,\tdv\a_2=0$.  Differentiating the infinitesimal period relation yields 
\begin{eqnarray*}
  0 & = & Q(\td\x_1,\td\x_2) 
  \ = \ 
  \td \x^3_1\wedge \td \x^3_2 
  	\,+\, \tsum_r\,\td \x^r_1 \wedge \td \x^r_2 \\
  & = & \td (t_1\,\n^3_1) \wedge \td(-\bi\ell(t))
  	\,+\, \tsum_r\,\td \a^r_1 \wedge \td \a^r_2
  \ = \ \tfrac{1}{2\pi} \td t_1 \wedge \td\n^3_1
  \,+\,\tsum_r\,\td \a^r_1 \wedge \td \a^r_2 \,.
\end{eqnarray*}
Evaluating this 2-form on $\partial_{t_1} \wedge v$ yields $\td_v\n^3_1 = 2\pi \sum_r (\tdv\a^r_1 \cdot \td_{\partial_{t_1}}\a^r_2 - \td_{\partial_{t_1}}\a_1^r \cdot \tdv\a_2^r)$.  In particular, $\displaystyle \lim_{t_1\to0} \tdv \n^3_1 = 0$, and the order of vanishing of $\tdv\n^3_1$ as $t_1\to0$ is bounded below by the order of vanishing of $\tdv\a_1, \tdv\a_2$.   It follows from the expressions for $h$ and $h_0$ in \S\ref{S:h2} that we have $\ddbv (\rho_0 + \rho_1) \ge 0$ in a neighborhood of $Z_1 \cap U = \{t_1=0\}$. 

This completes the proof of Lemma \ref{L:psh2fnt}. \hfill\qed

\subsection{Pseudoconvexity} \label{S:pcnvx2}

We now prove Theorem \ref{T:pseudocnvx} for degenerations of type \eqref{E:hd2}, following the argument outlined in \S\ref{S:prf-pcnvx}.  The analysis in the proof of Lemma \ref{L:psh2fnt} implies \S\ref{S:prf-pcnvx}\ref{i:psh}.  It remains to observe that $\rho$ satisfies \S\ref{S:prf-pcnvx}\ref{i:A}. As noted in that discussion, we have $\rho_0 \ge 0$ with $\{ \rho_0 = 0 \} =  Z \cap X$.  The fibre $A$ is locally characterized by 
\[
  A \,\cap\, U \ = \ 
  \{ t_1 , \x^3_1 , \x^5_1 , \x^r_1  = 0 \} \,.
\]
So along $Z \cap X$ we have $\rho_1 \ge 0$ with $A = \{ \rho_1 = 0\}$.  Analysis similar to that in in \S\ref{S:psh2} implies that, shrinking $X$ if necessary, we have $\rho \ge 0$ on $X$ and $A = \{ \rho = 0 \}$.  

\section{The third degeneration} \label{S:deg3}

The next type of degeneration has limiting mixed Hodge structure $(W,F,N)$ satisfying $W_0(V)=0$ and $W_3(V) = W_4(V) = V$, and with Hodge numbers $\bfh_1 = (2,2)$ and $\bfh_2 = (0,\sfh-4,0)$.  The associated Hodge diamond $\Diamond(V)$ is
\begin{equation}\label{E:hd3}
\begin{tikzpicture}[baseline={([yshift=-.5ex]current bounding box.center)},scale=1.1]
  \draw [<->] (0,2.75) -- (0,0) -- (2.75,0);		
  \draw [gray] (1,0) -- (1,2);
  \draw [gray] (2,0) -- (2,2);
  \draw [gray] (0,1) -- (2,1);
  \draw [gray] (0,2) -- (2,2);
  \draw [fill] (0,1) circle [radius=0.06];
  \node [above right] at (0,1) {{$v_7$}};
  \node [below right] at (0,1) {{$v_8$}};
  \draw [fill] (1,2) circle [radius=0.06];
  \node [above right] at (1,2) {{$v_3$}};
  \node [below right] at (1,2) {{$v_4$}};
  \draw [fill] (1,1) circle [radius=0.06];
  \node [above right] at (1,1) {$v_r$};
  \draw [fill] (1,0) circle [radius=0.06];
  \node [above right] at (1,0) {{$v_5$}};
  \node [below right] at (1,0) {{$v_6$}};
  \draw [fill] (2,1) circle [radius=0.06];
  \node [above right] at (2,1) {{$v_1$}};
  \node [below right] at (2,1) {{$v_2$}};
\end{tikzpicture}
\end{equation}
See \eqref{E:hd2-3} for the diamond $\Diamond(H)$.

We may choose a basis of $\{v_1,\ldots,v_\sfd\}$ of $V_\bC$ so that:  the polarization satisfies $Q(v_a,v_b) = \d^9_{a+b}$ for all $1 \le a,b\le 8$, $Q(v_r,v_s) = \d_{rs}$ for all $9 \le r,s$, and all other pairings are zero; the underlying real structure is $\overline{v_1} = v_3$, $\overline{v_2} = v_4$, $\overline{v_5} = v_7$, $\overline{v_6} = v_8$ and $\overline{v_r} = v_r$; the nilpotent operator is given by
\begin{equation} \nonumber
  N \ = \ \bi(v_6 \ot v^1 - v_8 \ot v^3) 
  \,+\, \bi(v_5 \ot v^2 - v_7 \ot v^4) \,;
\end{equation}
the Hodge filtration is $F^2(V_\bC) = \tspan_\bC\{v_1,v_2\}$, and the weight filtration is $W_1(V_\bC) = \tspan_\bC\{ v_5,\ldots,v_8 \}$.  We have $e_0 = v_1 \wedge v_2$ and $e_\infty = v_5\wedge v_6$, in the notation of \cite[\S\S\ref{extn-S:pmr}-\ref{extn-S:einfty}]{Robles-extnHnorm}.

\subsection{Period matrix representation}

There exist holomorphic functions $\x_1^j,\x_2^j : B \cap X \to \bC$, defined up to the action of $\Gamma_X$, so that $\x_1 = v_1 + \sum_{j\ge 3} \x_1^j\,v_j$, $\x_2 = v_2  + \sum_{j\ge3}\x_2^j$ frames $F^2(\Phi)$, and $\x_3 = v_3 - \x^6_2\,v_7 - \x^6_1\,v_8$, $\x_4 = v_4 - \x^5_2\,v_7 - \x^5_1\,v_8$, $\x_5 = v_5 - \x^4_2\,v_7 - \x^4_1\,v_8$, $\x_6 = v_6 - \x^3_2\,v_7 - \x^3_1\,v_8$, $\x_r = v_r - \x^r_2\,v_7 - \x^r_1\,v_8$ frames $F^1(\Phi)$. 

By \cite[\eqref{extn-SE:A}]{Robles-extnHnorm}, the fibre $A$ is cut out by 
\begin{equation}\label{E:A3}
  A \ = \ \{ \xi^3_1 \,,\, \xi^4_1 \,,\, 
  	\xi^3_2 \,,\, \xi^4_2 \ = \ 0\} \,\cap\, Z_1 \,.
\end{equation}
We have $\eta_0 = \xi_1 \wedge \xi_2$ and $\eta_\infty = \xi_5 \wedge \xi_6$.  

\subsection{Matrix coefficients in local coordinates}\label{S:matcoef3}

As discussed in \cite[\S\ref{extn-S:loc1}]{Robles-extnHnorm}, we have $\x_j = \exp(\ell(t_1)N) \z(t)\cdot(v_j)$, with $\z : U \to \exp(\fs_F^\perp)$ holomorphic.  We have $\fs_F^\perp = (\fs_F^\perp \cap \fz_N) \op \bC\,b$, with $\fz_N$ the centralizer of $N$ in $\fg_\bC$ and $b = (v_3 \ot v^2 -  v_7 \ot v^6 -v_4 \ot v^1 + v_8 \ot v^5)$.  Keeping in mind the condition \cite[\S\ref{extn-S:Aloc}]{Robles-extnHnorm} that the restriction of $\z$ to $Z_1 \cap U$ centralizes $N$, we may factor $\z = \exp (t_1 \n\,b) \cdot \a$ with $\a : U \to \exp(\fs_F^\perp \cap \fz_N)$ and $\n : U \to  \bC$ both holomorphic.  

Define $\b_i = b(\a_i)$.  Then 
\begin{eqnarray*}
  \x_i & = & \a_i + t_1\n\,\b_i \,+\, 
  \ell(t_1) N(\a_i + t_1\n\,\b_i) \,,\qquad i=1,2 \,,\\
  \x_j & = & \a_j + t_1\n\,\b_j \,,\qquad j=5,6 \,, \\
  \a_5 & = & -\bi N\a_2  \tand
  \a_6 \ = \ -\bi N\a_1 \,.
\end{eqnarray*}

Set $\a_j = \a(v_j)$, and define $\a_j^i \in \cO(U)$ by $\a_j = \a_j^i\,v_i$; the condition that $\a$ centralize $N$ implies 
\begin{equation}\label{E:kerN-3}
  \a^4_1 \ = \ \a^3_2 \,.
\end{equation}
The fibre \eqref{E:A3} is locally characterized by 
\begin{equation} \label{E:A3loc}
  A \,\cap \, U \ = \ \{ t_1 ,\, \a^3_1 \,,\, 
  	\a^4_1 \,,\, \a^3_2 \,,\, \a^4_2 \ = \ 0 \} \,.
\end{equation}

\subsection{The sections $\eta_0$ and $\eta_\infty$ in local coordinates}

Define $\a_0 = \a_1 \wedge \a_2$, $\b_0 = \a_1 \wedge \b_2 + \b_1 \wedge \a_2 + t_1 \n \b_1 \wedge\b_2)$  and $\b_\infty = -\bi (N\a_2) \wedge \b_6 - \b_5 \wedge \bi(N\a_1)
  + t_1\n \b_5 \wedge \b_6$.  Then 
\begin{eqnarray*}
  \eta_0 & = & \a_0 + t_1 \n \b_0 
  \,+\, \ell(t_1) N (\a_0 + t_1 \n \b_0)
  \,+\, \half \ell(t_1)^2 N^2 (\a_0 + t_1 \n \b_0) \\
  \eta_\infty & = & 
  \half N^2 \a_0 \ + \ 
  t_1\n \b_\infty \,.
\end{eqnarray*}

\subsection{The Hodge norms in local coordinates} \label{S:h3}

The extension of the Hodge norm on $Z \cap X$ to $X$ is $h = -\tRe\,Q(\eta_0 , \oleta_\infty)$.  Keeping \eqref{SE:Wn=2} in mind, we have
\begin{eqnarray*}
  h & = &
  -\half Q(\a_0 , N^2 \ola_0 ) \\
  & & + \ 
  \tRe \left\{
    t_1\n \left[ Q(\ola_0,\b_\infty) + Q(\b_\infty,\half N^2\ola_0) \right]
    \,+\, |t_1\n|^2\,Q(\b_0,\olb_\infty)
  \right\}
\end{eqnarray*}

The Hodge norm on $B \cap X$ is $h_0 = Q(\eta_0,\oleta_0)$.  Again, keeping \eqref{SE:Wn=2} in mind, in local coordinates we have
\begin{eqnarray*}
  h_0 & = &
  -\frac{(\log|t_1|^2)^2}{8\pi^2} \,
  Q \left( \a_0 + t_1 \n \b_0 \,,\, N^2(\overline{a_0 + t_1 \n \b_0}) \right) \\ 
  & & + \ \bi \frac{\log |t_1|^2}{2\pi}
  Q \left( \a_0 + t_1 \n \b_0 \,,\, N(\overline{\a_0 + t_1 \n \b_0}) \right) \\
  & & + \ Q 
  \left( \a_0 + t_1 \n \b_0 \,,\, \overline{\a_0 + t_1 \n \b_0} \right) \,.
\end{eqnarray*}

\subsection{Plurisubharmonicity} \label{S:psh3}

Set $\rho_0 = 1/h_0$ and $\rho_1 = -\log h$. We know that $\rho_0$ is psh on $X$, and that $\rho_1$ is psh on $Z \cap X$.  The goal of this section is to show that $\rho_0+\rho_1$ is psh on $U$.  We follow the approach of \S\ref{S:psh1} and \S\ref{S:psh2}.

\begin{lemma} 
Fix a holomorphic vector field $v$ on $U$ with $\tdv t_1 = 1$.  We have
\[
  \lim_{t_1\to0} \ddbv \, (\rho_0 + \rho_1) 
  \ = \ +\infty \,.
\]
\end{lemma}

\begin{proof}
From \eqref{E:kerN-3} and \eqref{E:A3loc} we see that 
\begin{eqnarray*}
  q_2(\a) & = & -Q(\a_0 , N^2\ola_0) \\
  & = & 
  (1 - |\a^3_1|^2 - |\a^4_1|^2) (1 - |\a^3_2|^2 - |\a^4_2|^2)
  \ - \ |\a^3_1\,\ola^3_2 + \a^4_1\,\ola^4_2|^2
\end{eqnarray*}
is identically one along $A \cap U$; shrinking $U$ if necessary, we may assume that $q_2(\a) > \half$ on $U$.  Then as $t_1 \to 0$, the expression $\ddbv\, (\rho_0 + \rho_1)$ is dominated by the term
$\displaystyle
  \frac{6\,q_2(\a)^2}{(8\pi^2)^2\,h_0^3} \,
  (\log |t_1|^2)^2\, \frac{\td t_1  \wedge\td \olt_1}{|t_1|^2}
$
in $\ddb \rho_0$.
\end{proof}

\begin{lemma} \label{L:psh3fnt}
Fix a holomorphic vector field $v$ on $U$ with $\tdv t_1 = 0$.  Shrinking $U$ if necessary, we have $\ddbv\, (\rho_0 + \rho_1) \ge 0$.
\end{lemma}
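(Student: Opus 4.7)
The plan is to mirror the case-by-case analysis used in the proofs of Lemmas \ref{L:psh1fnt} and \ref{L:psh2fnt}. I regard $h$ and $h_0$ as elements of the fraction field of $\cC^\w(U)[t_1,\olt_1,\log|t_1|^2]$, using \eqref{E:kerN-3} and the formulas of \S\ref{S:h3}, and examine the asymptotic behavior of $\ddbv(\rho_0+\rho_1)$ as $t_1 \to 0$.  Setting
\[
  q_2(\a) \ = \ -Q(\a_0, N^2 \ola_0) \tand q_0(\a) \ = \ Q(\a_0, \ola_0)\,,
\]
one obtains $h = \tfrac{1}{2}\,q_2(\a) + O(|t_1|)$ and $h_0 = -\tfrac{1}{8\pi^2}(\log|t_1|^2)^2\, q_2(\a) + \tfrac{\bi}{2\pi}(\log|t_1|^2)\,Q(\a_0, N\ola_0) + q_0(\a) + O(|t_1|)$.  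By \eqref{E:A3loc} the function $q_2(\a)$ equals $1$ identically along $A \cap U$, so shrinking $U$ if necessary I may assume $q_2(\a) > \tfrac{1}{2}$ on $U$.  The first Hodge--Riemann bilinear relations for $\x_1,\x_2$ together with \eqref{E:kerN-3} imply $-\log q_2(\a)$ is psh, and this furnishes the leading-order contribution to $\ddbv(\rho_0+\rho_1)$ as $t_1 \to 0$.

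Three cases arise.  \emph{First}, if $\lim_{t_1 \to 0} -\ddbv \log q_2(\a) > 0$, then after shrinking $U$ this term dominates and $\ddbv(\rho_0+\rho_1) > 0$.  \emph{Second}, if the limit is zero---equivalently $\tdv \a^3_1, \tdv \a^4_1, \tdv \a^3_2, \tdv \a^4_2$ all vanish at $t_1 = 0$---then the next-order term of $\ddbv \rho_0$ is proportional to $(\log|t_1|^2)\{q_2\,\ddbv q_0 + q_0\,\ddbv q_2\}/h_0^3$; combining the infinitesimal period relations $Q(\x_i,\td\x_j) = 0$ with the first Hodge--Riemann relations $Q(\x_i,\x_j) = 0$ shows $\ddbv q_0(\a) \le 0$ at $t_1 = 0$, and a strict inequality here, combined with the negative sign of $\log|t_1|^2$, produces a positive dominating term.  \emph{Third}, in the remaining case every $\tdv \a^i_j$ vanishes at $t_1 = 0$; differentiating $0 = Q(\td\x_1, \td\x_2)$ and evaluating on $\partial_{t_1} \wedge v$ expresses $\tdv \n$ as a bilinear combination of $\tdv \a^i_j$ and $\td_{\partial_{t_1}} \a^{i'}_{j'}$, so $\tdv \n$ vanishes along $t_1 = 0$ to at least the order of $\tdv \a$.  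The surviving $|t_1 \n|^2$-contributions to $h$ and $h_0$ then supply a term of the form $|t_1|^2 \log|t_1|^2 \left( \tfrac{\log|t_1|^2}{4\pi^2 h_0^2} - \tfrac{1}{2\pi h} \right) |\tdv \n|^2$, positive for $0 < |t_1|$ sufficiently small, exactly as in \S\ref{S:s3}.  Together the three cases give $\ddbv(\rho_0 + \rho_1) \ge 0$ on $U$.

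The main obstacle will be the second case: because $F^2$ has rank two and the weight-$1$ graded piece of $V$ is four-dimensional, one must use all four infinitesimal period relations $Q(\x_1,\td\x_2) = Q(\td\x_1,\x_2) = Q(\x_1,\td\x_1) = Q(\x_2,\td\x_2) = 0$ simultaneously with $\a^4_1 = \a^3_2$ in order to isolate which combinations of the remaining $\tdv \a^i_j$ (those with $i \ge 5$) actually enter $\ddbv q_0(\a)$, and to organize them into a negative semidefinite quadratic form.  Once this algebraic bookkeeping is carried out, the assembly of the final inequality proceeds in direct parallel with \S\ref{S:psh2}, and no new analytic difficulty arises.
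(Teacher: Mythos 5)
Your overall scaffolding (leading term $-\ddbv\log q_2(\a)$, final step via the infinitesimal period relation forcing $\tdv\n \to 0$) matches the paper, but your second case contains a genuine gap: it both identifies the wrong dominant subleading term and asserts an inequality that is false for this degeneration. The structural difference from the second degeneration is that here the linear-in-$\log$ coefficient of $h_0$, namely $q_1(\a) = \bi\, Q(\a_0, N\ola_0)$, is \emph{not} killed by any normalization analogous to \eqref{E:a3=0} (there $\a^3_1 = \a^3_2 = 0$ forces $Q(\a_i,N\ola_j)=0$; no such reduction is available in \S\ref{S:matcoef3}). Since $h_0 \sim \tfrac{(\log|t_1|^2)^2}{8\pi^2}q_2(\a) + \tfrac{\log|t_1|^2}{2\pi}q_1(\a) + q_0(\a) + \cdots$, the term following $-\ddbv\log q_2(\a)$ in $\ddbv(\rho_0+\rho_1)$ is of order $(\log|t_1|^2)^{-3}$ and proportional to $q_2(\a)\,\ddbv q_1(\a)$, not the $(\log|t_1|^2)^{-5}$-order term $(\log|t_1|^2)\{q_2\ddbv q_0 + q_0\ddbv q_2\}/h_0^3$ you propose. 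The paper inserts an entire intermediate step here: $\lim_{t_1\to0}\ddbv q_1(\a)$ is a positive semidefinite form in the $\tdv\a^r_i$, $r\ge 9$, and dominates unless those derivatives vanish. Only after that does one reach the $(\log|t_1|^2)^{-4}$-order term, and there the relevant quantity is $\w = -q_2(\a)\,\ddb q_0(\a) + 4\,\partial q_1(\a)\wedge\dbar q_1(\a)$, not $\ddb q_0(\a)$ alone.

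The claim that $\lim_{t_1\to0}\ddbv q_0(\a) \le 0$ is false here, and no amount of bookkeeping with the period relations will rescue it. Concretely, at a point of $A\cap U$ with $\a = \tid$, take $v$ with $\tdv\a_1 = v_6$ and $\tdv\a_2 = v_5$ (allowed by the infinitesimal period relation). Then $\lim_{t_1\to0}\ddbv q_0(\a) = 2\,\tRe\bigl\{Q(v_6,\olv_1)\,Q(v_2,\olv_5)\bigr\} = 2 > 0$; this is exactly the $2\tRe\{\cdots\}$ cross-term in the paper's formula, which pairs $\tdv\a_2^5$ against $\tdv\a_1^6$ and has no definite sign. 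The positivity is rescued only by the compensating term $4\,\dv q_1(\a)\,\dbv q_1(\a) = 16$ in $\w(v,\olv)$, which your decomposition omits entirely because you have discarded $q_1$. So the ``algebraic bookkeeping'' you defer to cannot be organized into a negative semidefinite form without first carrying out the $q_1$-analysis; the correct case division has four steps (filtered by $\tdv\a_i$ modulo $W_2$, then modulo $\tGr^W_2$, then modulo $W_1$, then zero), not three.
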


\begin{corollary} \label{C:phs3}
The function $\rho_0 + \rho_1$ is psh on $X$.
\end{corollary}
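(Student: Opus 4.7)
The plan is to mirror the four--step asymptotic analysis used to prove Lemmas \ref{L:psh1fnt} and \ref{L:psh2fnt}.  As before, I would regard $h$ and $h_0$ as polynomials in $t_1,\olt_1,\log|t_1|^2$ with coefficients in $\cC^\w(U)$, and view $\ddbv(\rho_0+\rho_1)$ as an element of the field of fractions of $\cC^\w(U)[t_1,\olt_1,\log|t_1|^2]$.  The asymptotic structure of $\rho_0,\rho_1$ as $t_1 \to 0$ (with $t_2,\ldots,t_r$ held fixed) is governed by $q_2(\a)$ and by $q_0(\a) = Q(\a_0,\ola_0)$, in direct analogy with \S\ref{S:prf1prelim}.

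First, since $q_2(\a)$ is identically one on $A \cap U$ by \eqref{E:A3loc} and the formula displayed in the proof of the preceding lemma, after shrinking $U$ one may assume $q_2(\a) > \half$ throughout $U$.  The first Hodge--Riemann bilinear relation $Q(\a_i,\a_j) = 0$ for $i,j=1,2$ provides polynomial identities that, together with the centralizer condition \eqref{E:kerN-3}, imply $-\log q_2(\a)$ is psh in a neighborhood of $A \cap U$.  If $\lim_{t_1\to 0} -\ddbv\log q_2(\a) > 0$ at the point in question, then further shrinking $U$ yields $\ddbv(\rho_0+\rho_1) > 0$, exactly as in \S\ref{S:s1}.

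Next, assume $\lim_{t_1\to 0} -\ddbv\log q_2(\a) = 0$, which amounts to $\tdv\a^3_i,\tdv\a^4_i \to 0$ at $t_1=0$ for $i=1,2$.  The $\log|t_1|^2$--coefficient of $\ddbv\rho_0$ is then controlled by $\ddbv q_0(\a)$.  Using the infinitesimal period relation $Q(\x_1,\td\x_2) = 0 = Q(\td\x_1,\x_2)$ together with the first Hodge--Riemann relation $Q(\a_i,\a_j) = 0$, I expect to derive $\ddbv q_0(\a) \le 0$ along $t_1=0$, with equality precisely when \emph{all} remaining components $\tdv\a^i_j$ vanish there.  When the inequality is strict, $\ddbv(\rho_0+\rho_1)$ is asymptotically dominated by the $\log|t_1|^2/h_0^3$ contribution in the expansion of $\ddbv\rho_0$, and is therefore positive for $0<|t_1|$ sufficiently small, exactly as in \S\ref{S:s2}.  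If instead all $\tdv\a^i_j$ vanish at $t_1 = 0$, then differentiating the infinitesimal period relation $Q(\td\x_1,\td\x_2)=0$ and evaluating the resulting 2--form on $\partial_{t_1}\wedge v$ expresses $\tdv\n$ at $t_1=0$ as a sum of products $\tdv\a^r_i \cdot \td_{\partial_{t_1}}\a^s_j$.  Consequently, the order of vanishing of $\tdv\n$ at $t_1=0$ is at least that of the $\tdv\a^i_j$, and the explicit formulas of \S\ref{S:h3} for $h$ and $h_0$ give $\ddbv(\rho_0+\rho_1) \ge 0$ in a neighborhood of $Z_1 \cap U$, in parallel with \S\ref{S:s3}.

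The main obstacle is Step 2.  Because $\bfh_1 = (2,2)$ (in contrast to the one--dimensional weight--one summand in \S\ref{S:mindeg} and its absence in \S\ref{S:deg2}), both sides of the Hodge diamond \eqref{E:hd3} carry two--dimensional $(1,0)$ and $(0,1)$ blocks, and the additional off--diagonal element $b$ linking them is also present.  The resulting pairings $Q(\a_i,\ola_j)$ and $Q(\tdv\a_i,\tdbv\ola_j)$ produce substantially more cross--terms than in the preceding cases, and isolating the cancellations that force $\ddbv q_0(\a)\le 0$ will require the combined use of the polynomial identities from $Q(\a_i,\a_j)=0$, the linear relations among derivatives from the infinitesimal period relation, and the centralizer identity \eqref{E:kerN-3}.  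Once this step is carried out, the conclusion $\ddbv(\rho_0+\rho_1)\ge 0$ follows as in the earlier cases.
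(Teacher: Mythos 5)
Your overall architecture is the paper's: the corollary follows by combining the normal\--direction blow\--up lemma with the tangential Lemma \ref{L:psh3fnt}, and the latter is proved by expanding $h,h_0$ in $\cC^\w(U)[t_1,\olt_1,\log|t_1|^2]$ and checking which term dominates $\ddbv(\rho_0+\rho_1)$ as $t_1\to 0$. Your Step 1 ($q_2(\a)$) and your final step (differentiating $Q(\td\x_1,\td\x_2)=0$ to bound the vanishing order of $\tdv\n$) match the paper. But there is a genuine gap in the middle, and it is precisely the step you flag as ``the main obstacle'' and leave as an expectation. After $\tdv\a^3_i,\tdv\a^4_i\to 0$, the next term in the hierarchy is \emph{not} governed by $q_0(\a)=Q(\a_0,\ola_0)$: in this degeneration $\eta_0$ carries a full length\--three $N$\--string, so $h_0$ expands as $\tfrac{(\log|t_1|^2)^2}{8\pi^2}q_2(\a) + \tfrac{\log|t_1|^2}{2\pi}\,q_1(\a) + q_0(\a) + \cdots$ with $q_1(\a)=\bi Q(\a_0,N\ola_0)$, and unlike \S\ref{S:deg2} there is no normalization here that kills $q_1$. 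Consequently the term $-\tfrac{(\log|t_1|^2)^3}{16\pi^3 h_0^3}\,q_2(\a)\,\ddbv q_1(\a)$ dominates your proposed $\log|t_1|^2$\--coefficient, and the paper's Step 2 is devoted to showing that $\lim_{t_1\to0}\ddbv q_1(\a)$ reduces to a positive semi\--definite expression in the $\tdv\a_i^r$, $r\ge 9$.

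Even once that is disposed of, your expected inequality $\ddbv q_0(\a)\le 0$ (with equality iff all remaining $\tdv\a^i_j$ vanish) is not the statement that closes the argument. At order $(\log|t_1|^2)^2/h_0^3$ the contribution of $-h_0\,\ddb h_0 + 2\,\partial h_0\wedge\dbar h_0$ mixes $\ddb q_0$ with the first\--derivative cross terms $\partial q_1\wedge\dbar q_1$; the paper must therefore analyze the combination $\w=-q_2(\a)\,\ddb q_0(\a)+4\,\partial q_1(\a)\wedge\dbar q_1(\a)$ and prove $\lim_{t_1\to0}\w(v,\olv)>0$ exactly when some $\tdv\a^5_i$ or $\tdv\a^6_i$ survives, using the Hodge--Riemann identities $Q(\a_i,\a_j)=0$ and the infinitesimal period relation to eliminate $\tdv\a^7_i,\tdv\a^8_i$. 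Without the $q_1$ step and without the $\partial q_1\wedge\dbar q_1$ correction, the sign of the subleading terms is not controlled, so as written the proposal does not establish Lemma \ref{L:psh3fnt} and hence does not yield the corollary.
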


\subsection{Proof of Lemma \ref{L:psh3fnt}}

The proof is along the same lines as that of Lemmas \ref{L:psh1fnt} and \ref{L:psh2fnt}, and we will be commensurately brief.

\subsubsection{Step 1}

We see from \S\ref{S:h3} that the leading term, as $t_1\to 0$ in $\ddbv(\rho_0+\rho_1)$ is $-\log q_2(\a)$.  It follows from general principles (or by direct computation, with the first Hodge--Riemann bilinear relation $0=Q(\a_i,\a_j)$, $i,j=1,2$) that $-\log q_2(\a)$ is psh.  If 
$\displaystyle \lim_{t_1\to0} -\ddbv\log q_2(\a)>0$, then
$\displaystyle \lim_{t_1\to0} \ddbv (\rho_0+\rho_1)>0$.

\subsubsection{Step 2}

Suppose 
$\displaystyle \lim_{t_1\to0} -\ddbv\log q_2(\a)=0$.  Since $-\ddb \log q_2(\a)=0$ if and only if $\tdv \a_1^3 ,\, \tdv \a_1^4 ,\, \tdv \a_2^3 ,\, \tdv \a_2^4 = 0$, this is equivalent to 
\begin{subequations}\label{SE:W3(a)=0}
\begin{equation}
  \lim_{t_1\to0} \tdv \a_1^3 ,\, \tdv \a_1^4 ,\, \tdv \a_2^3 ,\, \tdv \a_2^4 
  \ = \ 0 \,,
\end{equation}
which may be rephrased as 
\begin{equation}
  \lim_{t_1\to0} \tdv \a_1 , \tdv\a_2 \equiv 0
  \quad \hbox{modulo} \quad W_2(V_\bC) \,. 
\end{equation}
\end{subequations}
In this situation we have 
$\displaystyle \lim_{t_1\to0} \ddbv (\rho_0+\rho_1)=0$.  
Set
\begin{eqnarray*}
  q_1(\a) & = & \bi Q(\a_0,N\ola_0) 
  \ = \ 
  \bi Q \left(\a_1\wedge\a_2 \,,\, 
  	(N\ola_1) \wedge \ola_2 + \ola_1 \wedge (N\ola_2) \right) \\
  & = & 
  \bi Q(\a_1 , N\ola_1) Q(\a_2 , \ola_2 )
  \,-\, \bi Q(\a_2 , N\ola_1) Q(\a_1 , \ola_2 ) \\
  & &
  + \ \bi Q(\a_2 , N\ola_2) Q(\a_1 , \ola_1 )
  \,-\, \bi Q(\a_1 , N\ola_2) Q(\a_2 , \ola_1 ) \,.
\end{eqnarray*}
If
\begin{equation}\label{E:deg3s2}
  \lim_{t_1\to0} \ddbv \, q_1(\a) \ \not= \ 0 \,,
\end{equation}
then \S\ref{S:h3} implies that the term 
$\displaystyle -\frac{(\log|t_1|^2)^3}{16 \pi^3\,h_0^3} q_2(\a) \ddbv\,q_1(\a)$ in $-(\ddbv\,h_0)/h_0^2$ dominates $\ddbv (\rho_0+\rho_1)$ as $t_1\to \infty$.
It follows from \eqref{SE:Wn=2} and \eqref{SE:W3(a)=0} that
\begin{eqnarray*}
  \lim_{t_1\to0} \dbv \, q_1(\a) 
  & = & 
  \bi Q(\a_1 , N\ola_1) Q(\a_2 , \overline{\tdv \a_2} )
  \,-\, \bi Q(\a_2 , N\ola_1) Q(\a_1 , \overline{\tdv \a_2} ) \\
  & &
  + \ \bi Q(\a_2 , N\ola_2) Q(\a_1 , \overline{\tdv \a_1} )
  \,-\, \bi Q(\a_1 , N\ola_2) Q(\a_2 , \overline{\tdv \a_1} ) \,,\\
  \lim_{t_1\to0} \ddbv \, q_1(\a) 
  & = & 
  \bi Q(\a_1 , N\ola_1) Q(\tdv \a_2 , \overline{\tdv \a_2} )
  \,-\, \bi Q(\a_2 , N\ola_1) Q(\tdv \a_1 , \overline{\tdv \a_2} ) \\
  & &
  + \ \bi Q(\a_2 , N\ola_2) Q(\tdv \a_1 , \overline{\tdv \a_1} )
  \,-\, \bi Q(\a_1 , N\ola_2) Q(\tdv \a_2 , \overline{\tdv \a_1} ) \\
  & = & 
  \left(1 - |\a_1^3|^2 - |\a_1^4|^2\right) \, \tsum_{r\ge9} |\tdv \a_2^r|^2 \\
  & & + \ 
  \left(1 - |\a_2^3|^2 - |\a_2^4|^2\right) \, \tsum_{r\ge9} |\tdv \a_1^r|^2 \\
  &  & + \ 2\tRe(\a^4_1\ola^4_2) \,
  	\tsum_{r\ge9} \tdv \a_2^r \,\overline{\tdv \a_1^r}
  \ + \ 2\tRe(\a^3_1\ola^3_2) \,
  	\tsum_{r\ge9} \tdv \a_1^r \,\overline{\tdv \a_2^r}\,.
\end{eqnarray*}
It follows from \eqref{E:A3loc} that, after shrinking $U$ if necessary, \eqref{E:deg3s2} holds if and only if 
$\displaystyle \lim_{t_1\to0} \sum_{r\ge9} (|\tdv \a_1^r|^2 + |\tdv \a_2^r|^2) > 0$.  And in this case we have $\ddbv (\rho_0 + \rho_1) > 0$ in a punctured neighborhood $\{ 0 < |t_1| < \e\}$ of $Z_1 \cap U = \{t_1=0\}$. 

\subsubsection{Step 3}

Now suppose that both \eqref{SE:W3(a)=0} and 
$\displaystyle \lim_{t_1\to0} \sum_{r\ge9} (|\tdv \a_1^r|^2 + |\tdv \a_2^r|^2) = 0$ hold.   Together these are equivalent to 
\begin{equation}\label{E:W2(a)=0}
  \lim_{t_1\to0} \tdv \a_1 , \tdv\a_2 \equiv 0
  \quad \hbox{modulo} \quad W_1(V_\bC) \,. 
\end{equation}
Set
\[
  q_0(\a) \ = \ Q(\a_0,\ola_0) 
  \ =  \ Q(\a_1,\ola_1) Q(\a_2,\ola_2) \,-\,
  \left| Q(\a_1,\ola_2) \right|^2 \,,
\]
and $\w = -q_2(\a) \,\ddb q_0(\a) + 4 \, \partial q_1(\a) \wedge \dbar q_1(\a)$.  If
$\displaystyle
  \lim_{t_1\to0} \w(v,\olv) \not= 0 
$,
then \S\ref{S:h3} implies that the term 
$\displaystyle \frac{(\log|t_1|^2)^2}{8 \pi^2\,h_0^3} \w(v,\olv)$ in $\ddbv\,\rho_0 = \ddbv\,(1/h_0)$ dominates $\ddbv (\rho_0+\rho_1)$ as $t_1\to \infty$.

We claim that 
$\displaystyle
  \lim_{t_1\to0} \w(v,\olv) > 0
$
if and only if 
\begin{equation}\label{E:deg3s3}
  \lim_{t_1\to0} \,
  \left| \tdv \a_1^5 \right|^2 \,+\, \left| \tdv \a_1^6 \right|^2 \,+\,
  \left| \tdv \a_2^5 \right|^2 \,+\, \left| \tdv \a_2^6 \right|^2 \ > \ 0\,.
\end{equation}
To see this, note that \eqref{SE:Wn=2} and \eqref{E:W2(a)=0} yields
\begin{eqnarray*}
  \lim_{t_1\to0} \dbv \, q_1(\a) 
  & = & 
  \left( 1 - |\a_1^3|^2 - |\a_1^4|^2 \right)
  	\left( \overline{\tdv \a_2^5} \,+\, \a_2^4\,\overline{\tdv \a_2^7} 
		\,+\, \a_2^3\,\overline{\tdv \a_2^8} \right) \\
  & & + \ 
  \left( 1 - |\a_2^3|^2 - |\a_2^4|^2 \right)
  \left( \overline{\tdv \a_1^6} \,+\, \a_1^4\,\overline{\tdv \a_1^7} 
  	\,+\, \a_1^3\,\overline{\tdv \a_1^8} \right) \\
  & & + \ 
  2\,\tRe(\a^3_1\ola^3_2) \left(
    \overline{\tdv \a_2^6} \,+\, \a_1^4\,\overline{\tdv \a_2^7} 
    \,+\, \a_1^3\,\overline{\tdv \a_2^8}
  \right) \\
  & & + \ 
  2\,\tRe(\a^4_1\ola^4_2) \left(
    \overline{\tdv \a_1^5} \,+\, \a_2^4\,\overline{\tdv \a_1^7}
    \,+\, \a_2^3\,\overline{\tdv \a_1^8}
  \right) \,.
\end{eqnarray*}
Likewise
\begin{eqnarray*}
  \lim_{t_1\to0} \ddbv \, q_0(\a) 
  & = & 
  2\,\tRe\left\{
  	(\tdv \a_2^5 \,+\, \ola^4_2\,\tdv \a^7_2 \,+\, \ola^3_2\,\tdv \a^8_2)
	(\tdv \ola_1^6 \,+\, \a^4_1\,\tdv \ola^7_1 \,+\, \a^3_1\,\tdv \ola^8_1)
  \right\} \\
  & & -
  \left|
    \tdv \a_1^5 \,+\, \ola^4_2\,\tdv \a^7_1 \,+\, \ola^3_2\,\tdv \a^8_1
  \right|^2
  - 
  \left|
    \tdv \a_2^6 \,+\, \ola^4_1\,\tdv \a^7_2 \,+\, \ola^3_1\,\tdv \a^8_2
  \right|^2 \,. 
\end{eqnarray*}
Keeping \eqref{E:A3loc} in mind, and shrinking $U$ if necessary, the claim follows from these limits; the first Hodge--Riemann bilinear relation $0 = Q(\a_i,\a_j)$, for $i,j=1,2$, which reads
\begin{eqnarray*}
   -\a_1^8 & = & 
   \a_1^3 \,\a_1^6 \,+\, \a_1^4 \,\a_1^5 \,+\, \half \tsum_r\, (\a_1^r)^2 \\
   -\a_2^7
   & = & 
   \a_2^3 \,\a_2^6 \,+\, \a_2^4 \,\a_2^5 \,+\, \half \tsum_r\, (\a_2^r)^2 \\
   -\a_1^7 \,-\, \a_2^8
   & = & 
   \a_1^3 \,\a_2^6 \,+\, \a_1^4 \,\a_2^5 \,+\, \a_1^5 \,\a_2^4 \,+\, 
   \a_1^6 \,\a_2^3 \,+\, \tsum_r\, \a_1^r \,\a_2^r \,;
\end{eqnarray*}
and the infinitesimal period relation $0 = Q(\x_1,\td\x_2) = Q(\x_2,\td\x_1)$, which yields
\begin{eqnarray*}
  -\lim_{t_1\to0} \td \a^7_1 & = & 
  	\a_2^4\,\tdv\a^5_1 \,+\, \a_2^3\,\tdv\a^6_1 \\
  -\lim_{t_1\to0} \td \a^8_2 & = & 
  	\a_1^4\,\tdv\a^5_2 \,+\, \a_1^3\,\tdv\a^6_2 \,.
\end{eqnarray*}

\subsubsection{Step 4}

It remains to consider the case that
$\displaystyle \lim_{t_1\to0} \tdv \a_1 , \tdv \a_2 = 0$.  Differentiating the infinitesimal period relation yields 
\begin{eqnarray*}
  0 & = & Q(\td\x_1 , \td \x_2) \\
  & = &
  \td\x^3_1 \wedge \td\x^6_2 \,+\, \td\x^4_1 \wedge \td\x^5_2 
  	\,+\, \td\x^5_1 \wedge \td\x^4_2 \,+\, \td\x^6_1 \wedge \td\x^3_2 
	\,+\, \tsum  \td\x^r_1 \wedge \td\x^r_2 \\
  & = &
  \tfrac{1}{\pi}\td t_1 \wedge \td \n \ + \ 
  (\td \a^5_2 + \td \a^6_1) \wedge \td (t_1\n) \\
  & & + \ 
  \td\a^3_1 \wedge \td\a^6_2 \,+\, \td\a^4_1 \wedge \td\a^5_2 
  	\,+\, \td\a^5_1 \wedge \td\a^4_2 \,+\, \td\a^6_1 \wedge \td\a^3_2 
	\,+\, \tsum  \td\a^r_1 \wedge \td\a^r_2 \,.
\end{eqnarray*}
Evaluating this 2-form on $\partial_{t_1} \wedge v$ implies 
$\displaystyle \lim_{t_1\to0} \tdv \n =  0$, and that the order of vanishing of $\tdv\n$ as $t_1\to0$ is bounded below by the order of vanishing of $\tdv\a_1, \tdv\a_2$.   It follows from the expressions for $h$ and $h_0$ in \S\ref{S:h3} that we have $\ddbv (\rho_0 + \rho_1) \ge 0$ in a neighborhood of $Z_1 \cap U = \{t_1=0\}$. 

This completes the proof of Lemma \ref{L:psh3fnt}. \hfill\qed

\subsection{Pseudoconvexity} \label{S:pcnvx3}

We now prove Theorem \ref{T:pseudocnvx} for degenerations of type \eqref{E:hd3}, following the argument outlined in \S\ref{S:prf-pcnvx}.  The analysis in the proof of Lemma \ref{L:psh3fnt} implies \S\ref{S:prf-pcnvx}\ref{i:psh}.  It remains to observe that $\rho$ satisfies \S\ref{S:prf-pcnvx}\ref{i:A}. As noted in that discussion, we have $\rho_0 \ge 0$ with $\{ \rho_0 = 0 \} =  Z \cap X$.  The fibre $A$ is locally characterized by 
\[
  A \,\cap\, U \ = \ 
  \{ t_1 , \x^3_1 , \x^4_1 , \x^3_2 , \x^4_2 = 0 \} \,.
\]
So along $Z \cap X$ we have $\rho_1 \ge 0$ with $A = \{ \rho_1 = 0\}$.  Analysis similar to that in in \S\ref{S:psh3} implies that, shrinking $X$ if necessary, we have $\rho \ge 0$ on $X$ and $A = \{ \rho = 0 \}$.  

\section{The last two degenerations} \label{S:lastdeg}

\subsection{The penultimate degeneration}

The fourth type of degeneration has limiting mixed Hodge structure $(W,F,N)$ with Hodge numbers $\bfh_0 = (1)$, $\bfh_1 = (1,1)$ and $\bfh_2 = (0,\sfh-2,0)$.  The associated Hodge diamond $\Diamond(V)$ is
\begin{equation}\label{E:hd4}
\begin{tikzpicture}[baseline={([yshift=-.5ex]current bounding box.center)},scale=1.1]
  \draw [<->] (0,2.75) -- (0,0) -- (2.75,0);		
  \draw [gray] (1,0) -- (1,2);
  \draw [gray] (2,0) -- (2,2);
  \draw [gray] (0,1) -- (2,1);
  \draw [gray] (0,2) -- (2,2);
  \draw [fill] (0,1) circle [radius=0.06];
  \node [above right] at (0,1) {{$v_7$}};
  \draw [fill] (0,0) circle [radius=0.06];
  \node [above right] at (0,0) {{$v_6$}};
  \draw [fill] (1,2) circle [radius=0.06];
  \node [above right] at (1,2) {{$v_3$}};
  \draw [fill] (1,1) circle [radius=0.06];
  \node [above right] at (1,1) {$v_4$};
  \node [below right] at (1,1) {$v_r$};
  \draw [fill] (1,0) circle [radius=0.06];
  \node [above right] at (1,0) {{$v_5$}};
  \draw [fill] (2,2) circle [radius=0.06];
  \node [above right] at (2,2) {{$v_2$}};
  \draw [fill] (2,1) circle [radius=0.06];
  \node [above right] at (2,1) {{$v_1$}};
\end{tikzpicture}
\end{equation}
See \eqref{E:hd2-4} for the Hodge diamond $\Diamond(H)$.

We may choose a basis of $\{v_1,\ldots,v_\sfd\}$ of $V_\bC$ so that:  the polarization satisfies $Q(v_a,v_b) = \d^8_{a+b}$ for all $1 \le a,b\le 7$, $Q(v_r,v_s) = \d_{rs}$ for all $8 \le r,s$, and all other pairings are zero; the underlying real structure is $\overline{v_1} = v_3$, $\overline{v_2} = v_2$, $\overline{v_4} = -v_4$, $\overline{v_5} = v_7$, $\overline{v_6} = v_6$ and $\overline{v_r} = v_r$; the nilpotent operator is given by
\begin{equation} \nonumber
  N \ = \ \bi(v_5 \ot v^1 - v_7 \ot v^3) 
  \,+\, \bi( v_6\ot v^4 - v_4 \ot v^2 ) \,;
\end{equation}
the Hodge filtration is $F^2(V_\bC) = \tspan_\bC\{v_1,v_2\}$, and the weight filtration is $W_0(V_\bR) = \tspan_\bR\{ v_6 \}$ and $W_1(V_\bC) = \tspan_\bC\{ v_5 , v_6 , v_7 \}$.  We have $e_0 = v_1 \wedge v_2$ and $e_\infty = v_5\wedge v_6$, in the notation of \cite[\S\S\ref{extn-S:pmr}-\ref{extn-S:einfty}]{Robles-extnHnorm}.

The filtration $F^2(\Phi)$ is framed by $\x_1 = v_1 + \sum_{j\ge3} \x^j_1\,v_1$ and $\x_2 = v_2 + \sum_{j\ge3} \x^j_3\,v_j$.  The collection $\{\x_1,\x_2\} \cup \{ \x_a = v_a + \x_a^6\,v_6 + \x_a^7\,v_7 \ | \ a=3,4,5\,,\  a \ge 8\}$ frames $F^1(\Phi)$.  Here the $\x^j_i$ a holomorphic  functions on $B \cap X$ are defined up to the action of $\Gamma_X$.  

We have
\[
  \eta_0 \ = \ \x_1 \wedge \x_2 \tand
  \eta_\infty \ = \ \x_5 \wedge \x_6 
\]
and
\begin{eqnarray*}
  h \  = \ Q(\eta_0 , \oleta_\infty) & = & 1 \,-\, |\x_1^3|^2
\end{eqnarray*}
and $-\log h$ is psh on $X$.  Set $\rho_1 = -\log h$, and let $\rho_0$ be the smooth function given by \eqref{E:rho0a}.  By \cite[\eqref{extn-SE:A}]{Robles-extnHnorm}, the fibre $A \subset Z$ is cut out by $\{ \x^3_1=0 \}$.  It is easily verified that the conditions \S\ref{S:prf-pcnvx}\ref{i:rho0}-\ref{i:A} hold, establishing Theorem \ref{T:pseudocnvx} for degenerations of type \eqref{E:hd4}.   In this case the psh exhaustion $\rho$ is smooth.

\subsection{The Hodge--Tate degeneration} \label{S:ht}

These limiting mixed Hodge structures are characterized by $W_0(V) = W_1(V)$ and $W_2(V) = W_3(V)$; and with Hodge numbers $\bh_0 =2$ and $\bfh_2 = (0,\sfh,0)$.  The associated Hodge diamonds are given in \eqref{E:hd2-5}.

The proof of Theorem \ref{T:pseudocnvx} is trivial for Hodge--Tate degenerations: The function $h$ is identically 1, so that $\rho_1 = 0$.  We must have $A = Z_1^* =  Z_1$, and the smooth function $\rho_0$ defined in \eqref{E:rho0a} yields the desired psh exhaustion of $X$.

\appendix

\section{Hodge diamonds} \label{S:hd}

Given a mixed Hodge structure $(W,F)$ on a vector space $V$ the \emph{Hodge diamond} $\Diamond_{W,F}(V)$ is a visual representation of the Deligne splitting $V_\bC = \op\,V^{p,q}_{W,F}$ (\cite[\S\ref{extn-S:gpq}]{Robles-extnHnorm}) that is given by a configuration of points in the $(p,q)$--plane that is labeled with $\tdim_\bC\,V^{p,q}_{W,F}$.  This device encodes much of the discrete data in $(W,F)$, and may illuminate the constructions and arguments  here that utilize limiting mixed Hodge structures.  In this appendix we give the Hodge diamonds for the (non-hermitian) period domain parameterizing pure, effective, weight $\sfw=2$ polarized Hodge structures on $V$ with Hodge numbers $\bfh = (2,\sfh,2)$.  There are six possible Hodge diamonds.  We have $H = \tw^2V = \fg \ot \bQ(-2)$ and $\sfn=4$.  In the diamonds below, some of the nodes are left unmarked; those missing dimensions may be determined by \cite[\eqref{extn-E:dsfilts}, \eqref{extn-E:symmetries}]{Robles-extnHnorm}.
\begin{equation} \label{E:hd2-0}
\begin{tikzpicture}[baseline={([yshift=-.5ex]current bounding box.center)}]
  \node [above] at (1.5,2.4) {$\Diamond(V)$}; 		
  \draw [<->] (0,2.75) -- (0,0) -- (2.75,0);		
  \node [left] at (2,-1) {$\sfm=4$};				
  \draw [gray] (1,0) -- (1,2);
  \draw [gray] (2,0) -- (2,2);
  \draw [gray] (0,1) -- (2,1);
  \draw [gray] (0,2) -- (2,2);
  \draw [fill] (0,2) circle [radius=0.08];
  \node [above right] at (0,2) {\scriptsize{$2$}};
  \draw [fill] (1,1) circle [radius=0.08];
  \node [above right] at (1,1) {\scriptsize{$\sfh$}};
  \draw [fill] (2,0) circle [radius=0.08];
  \node [above right] at (2,0) {\scriptsize{$2$}};
\end{tikzpicture}
\hsp{70pt}
\begin{tikzpicture}[baseline={([yshift=-.5ex]current bounding box.center)}]
  \node [above] at (1,2.5) {$\Diamond(H)$};			
  \draw [<->] (-2,2.75) -- (-2,-2) -- (2.75,-2);	
  \draw [gray] (0,-2) -- (0,2);						
  \draw [gray] (-2,0) -- (2,0);						
  \draw [gray] (2,-2) -- (2,2);
  \draw [gray] (1,-2) -- (1,2);
  \draw [gray] (-2,-2) -- (-2,2);
  \draw [gray] (-1,-2) -- (-1,2);
  \draw [gray] (-2,2) -- (2,2);
  \draw [gray] (-2,1) -- (2,1);
  \draw [gray] (-2,-1) -- (2,-1);
  \draw [gray] (-2,-2) -- (2,-2);
  \draw [fill] (-2,2) circle [radius=0.08];
  \node [above right] at (-2,2) {\scriptsize{$1$}};
  \node [below right] at (-2,2) {\footnotesize{$e_\sfd$}};
  \draw [fill] (-1,1) circle [radius=0.08];
  \node [above right] at (-1,1) {\scriptsize{$2\sfh$}};
  \draw [fill] (0,0) circle [radius=0.08];
  \node [above right] at (0,0) {};
  \draw [fill] (1,-1) circle [radius=0.08];
  \node [above right] at (1,-1) {\scriptsize{$2\sfh$}};
  \draw [fill] (2,-2) circle [radius=0.08];
  \node [above right] at (2,-2) {\scriptsize{$1$}};
  \node [below] at (2,-2) {\footnotesize{$e_0=e_\infty$}};
\end{tikzpicture}
\end{equation}
\begin{equation} \label{E:hd2-1}
\begin{tikzpicture}[baseline={([yshift=-.5ex]current bounding box.center)}]
  \node [above] at (1.5,2.4) {$\Diamond(V)$}; 		
  \draw [<->] (0,2.75) -- (0,0) -- (2.75,0);		
  \node [left] at (2,-1) {$\sfm=5$};				
  \draw [gray] (1,0) -- (1,2);
  \draw [gray] (2,0) -- (2,2);
  \draw [gray] (0,1) -- (2,1);
  \draw [gray] (0,2) -- (2,2);
  \draw [fill] (0,2) circle [radius=0.08];
  \node [above right] at (0,2) {\scriptsize{$1$}};
  \draw [fill] (0,1) circle [radius=0.08];
  \node [above right] at (0,1) {\scriptsize{$1$}};
  \draw [fill] (1,2) circle [radius=0.08];
  \node [above right] at (1,2) {\scriptsize{$1$}};
  \draw [fill] (1,1) circle [radius=0.08];
  \node [above] at (1,1) {};
  \draw [fill] (1,0) circle [radius=0.08];
  \node [above right] at (1,0) {\scriptsize{$1$}};
  \draw [fill] (2,1) circle [radius=0.08];
  \node [above right] at (2,1) {\scriptsize{$1$}};
  \draw [fill] (2,0) circle [radius=0.08];
  \node [above right] at (2,0) {\scriptsize{$1$}};
\end{tikzpicture}
\hsp{70pt}
\begin{tikzpicture}[baseline={([yshift=-.5ex]current bounding box.center)}]
  \node [above] at (1,2.5) {$\Diamond(H)$};			
  \draw [<->] (-2,2.75) -- (-2,-2) -- (2.75,-2);	
  \draw [gray] (0,-2) -- (0,2);						
  \draw [gray] (-2,0) -- (2,0);						
  \draw [gray] (2,-2) -- (2,2);
  \draw [gray] (1,-2) -- (1,2);
  \draw [gray] (-2,-2) -- (-2,2);
  \draw [gray] (-1,-2) -- (-1,2);
  \draw [gray] (-2,2) -- (2,2);
  \draw [gray] (-2,1) -- (2,1);
  \draw [gray] (-2,-1) -- (2,-1);
  \draw [gray] (-2,-2) -- (2,-2);
  \draw [fill] (-2,1) circle [radius=0.08];
  \node [above right] at (-2,1) {\scriptsize{$1$}};
  \node [below right] at (-2,1) {\footnotesize{$e_\sfd$}};
  \draw [fill] (-1,2) circle [radius=0.08];
  \node [above right] at (-1,2) {\scriptsize{$1$}};
  \draw [fill] (-1,1) circle [radius=0.08];
  \draw [fill] (-1,0) circle [radius=0.08];
  \node [above] at (-1,0) {\scriptsize{$\sfh-1$}};
  \draw [fill] (-1,-1) circle [radius=0.08];
  \node [above right] at (-1,-1) {\scriptsize{$1$}};
  \draw [fill] (0,1) circle [radius=0.08];
  \node [above right] at (0,1) {};
  \draw [fill] (0,0) circle [radius=0.08];
  \node [above right] at (0,0) {};
  \draw [fill] (0,-1) circle [radius=0.08];
  \node [above right] at (0,-1) {};
  \draw [fill] (1,-1) circle [radius=0.08];
  \node [above] at (1,-1) {};
  \draw [fill] (1,0) circle [radius=0.08];
  \node [above] at (1,0) {};
  \draw [fill] (1,1) circle [radius=0.08];
  \node [above right] at (1,1) {\scriptsize{$1$}};
  \draw [fill] (1,-2) circle [radius=0.08];
  \node [above right] at (1,-2) {\scriptsize{$1$}};
  \node [below right] at (1,-2) {\footnotesize{$e_\infty$}};
  \draw [fill] (2,-1) circle [radius=0.08];
  \node [above right] at (2,-1) {\scriptsize{$1$}};
  \node [below right] at (2,-1) {\footnotesize{$e_0$}};
\end{tikzpicture}
\end{equation}
\begin{equation} \label{E:hd2-2}
\begin{tikzpicture}[baseline={([yshift=-.5ex]current bounding box.center)}]
  \node [above] at (1.5,2.4) {$\Diamond(V)$}; 		
  \draw [<->] (0,2.75) -- (0,0) -- (2.75,0);		
  \node [left] at (2,-1) {$\sfm=6$};				
  \draw [gray] (1,0) -- (1,2);
  \draw [gray] (2,0) -- (2,2);
  \draw [gray] (0,1) -- (2,1);
  \draw [gray] (0,2) -- (2,2);
  \draw [fill] (0,2) circle [radius=0.08];
  \node [above right] at (0,2) {\scriptsize{$1$}};
  \draw [fill] (0,0) circle [radius=0.08];
  \node [above right] at (0,0) {\scriptsize{$1$}};
  \draw [fill] (1,1) circle [radius=0.08];
  \node [above right] at (1,1) {\scriptsize{$\sfh$}};
  \draw [fill] (2,2) circle [radius=0.08];
  \node [above right] at (2,2) {\scriptsize{$1$}};
  \draw [fill] (2,0) circle [radius=0.08];
  \node [above right] at (2,0) {\scriptsize{$1$}};
\end{tikzpicture}
\hsp{70pt}
\begin{tikzpicture}[baseline={([yshift=-.5ex]current bounding box.center)}]
  \node [above] at (1,2.5) {$\Diamond(H)$};			
  \draw [<->] (-2,2.75) -- (-2,-2) -- (2.75,-2);	
  \draw [gray] (0,-2) -- (0,2);						
  \draw [gray] (-2,0) -- (2,0);						
  \draw [gray] (2,-2) -- (2,2);
  \draw [gray] (1,-2) -- (1,2);
  \draw [gray] (-2,-2) -- (-2,2);
  \draw [gray] (-1,-2) -- (-1,2);
  \draw [gray] (-2,2) -- (2,2);
  \draw [gray] (-2,1) -- (2,1);
  \draw [gray] (-2,-1) -- (2,-1);
  \draw [gray] (-2,-2) -- (2,-2);
  \draw [fill] (2,0) circle [radius=0.08];
  \node [above right] at (2,0) {\scriptsize{$1$}};
  \node [below right] at (2,0) {\footnotesize{$e_0$}};
  \draw [fill] (0,2) circle [radius=0.08];
  \node [above right] at (0,2) {\scriptsize{$1$}};
  \draw [fill] (-1,1) circle [radius=0.08];
  \node [above right] at (-1,1) {\scriptsize{$\sfh$}};
  \draw [fill] (-1,-1) circle [radius=0.08];
  \node [above right] at (-1,-1) {\scriptsize{$\sfh$}};
  \draw [fill] (0,0) circle [radius=0.08];
  \node [above right] at (0,0) {};
  \draw [fill] (1,-1) circle [radius=0.08];
  \node [above right] at (1,-1) {\scriptsize{$\sfh$}};
  \draw [fill] (1,1) circle [radius=0.08];
  \node [above right] at (1,1) {\scriptsize{$\sfh$}};
  \draw [fill] (0,-2) circle [radius=0.08];
  \node [above right] at (0,-2) {\scriptsize{$1$}};
  \node [below right] at (0,-2) {\footnotesize{$e_\infty$}};
  \draw [fill] (-2,0) circle [radius=0.08];
  \node [above right] at (-2,0) {\scriptsize{$1$}};
  \node [below right] at (-2,0) {\footnotesize{$e_\sfd$}};
\end{tikzpicture}
\end{equation}
\begin{equation} \label{E:hd2-3}
\begin{tikzpicture}[baseline={([yshift=-.5ex]current bounding box.center)}]
  \node [above] at (1.5,2.4) {$\Diamond(V)$}; 		
  \draw [<->] (0,2.75) -- (0,0) -- (2.75,0);		
  \node [left] at (2,-1) {$\sfm=6$};				
  \draw [gray] (1,0) -- (1,2);
  \draw [gray] (2,0) -- (2,2);
  \draw [gray] (0,1) -- (2,1);
  \draw [gray] (0,2) -- (2,2);
  \draw [fill] (0,1) circle [radius=0.08];
  \node [above right] at (0,1) {\scriptsize{$2$}};
  \draw [fill] (1,2) circle [radius=0.08];
  \node [above right] at (1,2) {\scriptsize{$2$}};
  \draw [fill] (1,1) circle [radius=0.08];
  \node [above] at (1,1) {};
  \draw [fill] (1,0) circle [radius=0.08];
  \node [above right] at (1,0) {\scriptsize{$2$}};
  \draw [fill] (2,1) circle [radius=0.08];
  \node [above right] at (2,1) {\scriptsize{$2$}};
\end{tikzpicture}
\hsp{70pt}
\begin{tikzpicture}[baseline={([yshift=-.5ex]current bounding box.center)}]
  \node [above] at (1,2.5) {$\Diamond(H)$};			
  \draw [<->] (-2,2.75) -- (-2,-2) -- (2.75,-2);	
  \draw [gray] (0,-2) -- (0,2);						
  \draw [gray] (-2,0) -- (2,0);						
  \draw [gray] (2,-2) -- (2,2);
  \draw [gray] (1,-2) -- (1,2);
  \draw [gray] (-2,-2) -- (-2,2);
  \draw [gray] (-1,-2) -- (-1,2);
  \draw [gray] (-2,2) -- (2,2);
  \draw [gray] (-2,1) -- (2,1);
  \draw [gray] (-2,-1) -- (2,-1);
  \draw [gray] (-2,-2) -- (2,-2);
  \draw [fill] (2,0) circle [radius=0.08];
  \node [above right] at (2,0) {\scriptsize{$1$}};
  \node [below right] at (2,0) {\footnotesize{$e_0$}};
  \draw [fill] (0,2) circle [radius=0.08];
  \node [above right] at (0,2) {\scriptsize{$1$}};
  \draw [fill] (-1,1) circle [radius=0.08];
  \node [above right] at (-1,1) {\scriptsize{$4$}};
  \draw [fill] (-1,0) circle [radius=0.08];
  \node [above left] at (-1,0) {};
  \draw [fill] (-1,-1) circle [radius=0.08];
  \node [above right] at (-1,-1) {\scriptsize{$4$}};
  \draw [fill] (0,1) circle [radius=0.08];
  \node [above right] at (0,1) {};
  \draw [fill] (0,0) circle [radius=0.08];
  \node [above right] at (0,0) {};
  \draw [fill] (0,-1) circle [radius=0.08];
  \node [above right] at (0,-1) {};
  \draw [fill] (1,-1) circle [radius=0.08];
  \node [above right] at (1,-1) {\scriptsize{$4$}};
  \draw [fill] (1,0) circle [radius=0.08];
  \node [above] at (1,0) {};
  \draw [fill] (1,1) circle [radius=0.08];
  \node [above right] at (1,1) {\scriptsize{$4$}};
  \draw [fill] (0,-2) circle [radius=0.08];
  \node [above right] at (0,-2) {\scriptsize{$1$}};
  \node [below right] at (0,-2) {\footnotesize{$e_\infty$}};
  \draw [fill] (-2,0) circle [radius=0.08];
  \node [above right] at (-2,0) {\scriptsize{$1$}};
  \node [below right] at (-2,0) {\footnotesize{$e_\sfd$}};
\end{tikzpicture}
\end{equation}
\begin{equation} \label{E:hd2-4}
\begin{tikzpicture}[baseline={([yshift=-.5ex]current bounding box.center)}]
  \node [above] at (1.5,2.4) {$\Diamond(V)$}; 		
  \draw [<->] (0,2.75) -- (0,0) -- (2.75,0);		
  \node [left] at (2,-1) {$\sfm=7$};				
  \draw [gray] (1,0) -- (1,2);
  \draw [gray] (2,0) -- (2,2);
  \draw [gray] (0,1) -- (2,1);
  \draw [gray] (0,2) -- (2,2);
  \draw [fill] (0,1) circle [radius=0.08];
  \node [above right] at (0,1) {\scriptsize{$1$}};
  \draw [fill] (0,0) circle [radius=0.08];
  \node [above right] at (0,0) {\scriptsize{$1$}};
  \draw [fill] (1,2) circle [radius=0.08];
  \node [above right] at (1,2) {\scriptsize{$1$}};
  \draw [fill] (1,1) circle [radius=0.08];
  \node [above] at (1,1) {};
  \draw [fill] (1,0) circle [radius=0.08];
  \node [above right] at (1,0) {\scriptsize{$1$}};
  \draw [fill] (2,2) circle [radius=0.08];
  \node [above right] at (2,2) {\scriptsize{$1$}};
  \draw [fill] (2,1) circle [radius=0.08];
  \node [above right] at (2,1) {\scriptsize{$1$}};
\end{tikzpicture}
\hsp{70pt}
\begin{tikzpicture}[baseline={([yshift=-.5ex]current bounding box.center)}]
  \node [above] at (1,2.5) {$\Diamond(H)$};			
  \draw [<->] (-2,2.75) -- (-2,-2) -- (2.75,-2);	
  \draw [gray] (0,-2) -- (0,2);						
  \draw [gray] (-2,0) -- (2,0);						
  \draw [gray] (2,-2) -- (2,2);
  \draw [gray] (1,-2) -- (1,2);
  \draw [gray] (-2,-2) -- (-2,2);
  \draw [gray] (-1,-2) -- (-1,2);
  \draw [gray] (-2,2) -- (2,2);
  \draw [gray] (-2,1) -- (2,1);
  \draw [gray] (-2,-1) -- (2,-1);
  \draw [gray] (-2,-2) -- (2,-2);
  \draw [fill] (2,1) circle [radius=0.08];
  \node [above right] at (2,1) {\scriptsize{$1$}};
  \node [below right] at (2,1) {\footnotesize{$e_0$}};
  \draw [fill] (1,2) circle [radius=0.08];
  \node [above right] at (1,2) {\scriptsize{$1$}};
  \draw [fill] (-1,1) circle [radius=0.08];
  \node [above right] at (-1,1) {\scriptsize{$1$}};
  \draw [fill] (-1,0) circle [radius=0.08];
  \node [above] at (-1,0) {\scriptsize{$\sfh-1$}};
  \draw [fill] (-1,-1) circle [radius=0.08];
  \draw [fill] (0,1) circle [radius=0.08];
  \node [above right] at (0,1) {};
  \draw [fill] (0,0) circle [radius=0.08];
  \node [above right] at (0,0) {};
  \draw [fill] (0,-1) circle [radius=0.08];
  \node [above right] at (0,-1) {};
  \draw [fill] (1,-1) circle [radius=0.08];
  \node [above] at (1,-1) {};
  \draw [fill] (1,0) circle [radius=0.08];
  \node [above] at (1,0) {};
  \draw [fill] (1,1) circle [radius=0.08];
  \node [above] at (1,1) {};
  \draw [fill] (-1,-2) circle [radius=0.08];
  \node [above right] at (-1,-2) {\scriptsize{$1$}};
  \node [below right] at (-1,-2) {\footnotesize{$e_\infty$}};
  \draw [fill] (-2,-1) circle [radius=0.08];
  \node [above right] at (-2,-1) {\scriptsize{$1$}};
  \node [below right] at (-2,-1) {\footnotesize{$e_\sfd$}};
\end{tikzpicture}
\end{equation}
\begin{equation}\label{E:hd2-5}
\begin{tikzpicture}[baseline={([yshift=-.5ex]current bounding box.center)}]
  \node [above] at (1.5,2.4) {$\Diamond(V)$}; 		
  \draw [<->] (0,2.75) -- (0,0) -- (2.75,0);		
  \node [left] at (2,-1) {$\sfm=8$};				
  \draw [gray] (1,0) -- (1,2);
  \draw [gray] (2,0) -- (2,2);
  \draw [gray] (0,1) -- (2,1);
  \draw [gray] (0,2) -- (2,2);
  \draw [fill] (0,0) circle [radius=0.08];
  \node [above right] at (0,0) {\scriptsize{$2$}};
  \draw [fill] (1,1) circle [radius=0.08];
  \node [above right] at (1,1) {\scriptsize{$\sfh$}};
  \draw [fill] (2,2) circle [radius=0.08];
  \node [above right] at (2,2) {\scriptsize{$2$}};
\end{tikzpicture}
\hsp{70pt}
\begin{tikzpicture}[baseline={([yshift=-.5ex]current bounding box.center)}]
  \node [above] at (1,2.5) {$\Diamond(H)$};			
  \draw [<->] (-2,2.75) -- (-2,-2) -- (2.75,-2);	
  \draw [gray] (0,-2) -- (0,2);						
  \draw [gray] (-2,0) -- (2,0);						
  \draw [gray] (2,-2) -- (2,2);
  \draw [gray] (1,-2) -- (1,2);
  \draw [gray] (-2,-2) -- (-2,2);
  \draw [gray] (-1,-2) -- (-1,2);
  \draw [gray] (-2,2) -- (2,2);
  \draw [gray] (-2,1) -- (2,1);
  \draw [gray] (-2,-1) -- (2,-1);
  \draw [gray] (-2,-2) -- (2,-2);
  \draw [fill] (-2,-2) circle [radius=0.08];
  \node [above right] at (-2,-2) {\scriptsize{$1$}};
  \node [below] at (-2,-2) {\footnotesize{$e_\sfd=e_\infty$}};
  \draw [fill] (-1,-1) circle [radius=0.08];
  \node [above right] at (-1,-1) {\scriptsize{$2\sfh$}};
  \draw [fill] (0,0) circle [radius=0.08];
  \node [above right] at (0,0) {};
  \draw [fill] (1,1) circle [radius=0.08];
  \node [above right] at (1,1) {\scriptsize{$2\sfh$}};
  \draw [fill] (2,2) circle [radius=0.08];
  \node [above right] at (2,2) {\scriptsize{$1$}};
  \node [below right] at (2,2) {\footnotesize{$e_0$}};
\end{tikzpicture}
\end{equation}


\def\cprime{$'$} \def\Dbar{\leavevmode\lower.6ex\hbox to 0pt{\hskip-.23ex
  \accent"16\hss}D}

\end{document}